\documentclass[11pt]{article}
\usepackage{amsmath}
\usepackage{amsfonts}
\usepackage{graphicx}
\usepackage{amssymb}
\usepackage{cite}

\newtheorem{condition**}{A*}
\newtheorem{condition***}{C*}
\newtheorem{condition*}{C}

\newtheorem{proposition}{Proposition}[section]

\newtheorem{definition}{Definition}[section]
\newtheorem{theorem}{Theorem}[section]
\newtheorem{lemma}{Lemma}[section]
\newtheorem{remark}{Remark}[section]

\newenvironment{keywords}{{\bf Key words: }}{}

\textwidth= 160 mm
\textheight= 220 mm
\oddsidemargin=2 mm
\topskip 0.5cm
\topmargin=-0.5in

\begin{document}

\title{A Class of Linear-Quadratic-Gaussian (LQG) Mean-Field Game (MFG) of Stochastic Delay Systems}

\author{
Na Li \thanks{N. Li is with the Department of  Mathematics,
Qilu Normal University, Jinan. N. Li acknowledges the financial support partly by the Project B-Q34X and G-YL04. }
\quad \quad Shujun Wang \thanks{S. Wang is with the Department of Applied Mathematics,
The Hong Kong Polytechnic University, Hong Kong. }}

\maketitle

\begin{abstract}
This paper investigates the linear-quadratic-Gaussian (LQG) mean-field game (MFG) for a class of stochastic delay systems. We consider a large population system in which the dynamics of each player satisfies some forward stochastic differential delay equation (SDDE). The consistency condition or Nash certainty equivalence (NCE) principle is established through an auxiliary mean-field  system of anticipated forward-backward stochastic differential equation with delay (AFBSDDE). The wellposedness of such consistency condition system can be further established by some continuation method instead the  classical fixed-point analysis. Thus, the consistency condition maybe given on arbitrary time horizon. The decentralized strategies are derived which are shown to satisfy the $\epsilon$-Nash equilibrium property. Two special cases of our MFG for delayed system are further investigated.
\end{abstract}

\begin{keywords}Anticipated forward-backward stochastic differential equation with delay (AFBSDDE), Continuation method, $\epsilon$-Nash equilibrium, Mean-field game, Stochastic differential equation with delay (SDDE).
\end{keywords}

\section{Introduction}

Recently, within the context of noncooperative game theory, the dynamic optimization of stochastic large-population system has attracted consistent and intense research attentions through a variety of fields including management science, engineering, mathematical finance and economics, social science, etc. The most special feature of controlled large-population system lies in the existence of considerable insignificant agents whose dynamics and (or) cost functionals are coupled via the state-average across the whole population. To design low-complexity strategies, one efficient methodology is the mean-field game (MFG) theory which enable us to obtain the decentralized control based on the individual own state together with some off-line quantity. The interested readers may refer \cite{GLL10,LL07} for the motivation and methodology, and \cite{AD,BCQ,BDL,CD} for recent progress in mean-field game theory. Besides, some other recent literature include \cite{B12,B11,hcm07,HCM12,hmc06,LZ08} for linear-quadratic-Gaussian (LQG) mean-field games of large-population system.

It is remarkable that all agents in above literature are comparably negligible in that they are not able to affect the whole population in separable manner. By contrast, their impacts are imposed in an unified manner through the population state-average. In this sense, all agents can be viewed as negligible peers but they can generate some mass effects via some ``unified manner" such as the control (input)-average or state (output)-average. These averages represent some type of impact imposed to other peers.

We point out in above works, all agents' states are formulated by (forward) stochastic differential equations (SDEs) with the initial conditions as a priori. As a sequel, the agents' objectives are minimizations of cost functionals involving their terminal states. In some realistic situation, there
exist some phenomena in which the state behavior depends not only on the
situation at time $t$, but also on a finite lagged state at $t-\theta.$
Moreover, if we use the information which we know to anticipate the
future evolution, we can get better results. As the novelty, this paper turns to consider the delay framework in which the agents' dynamics is characterized by some (forward) stochastic differential equations with delay (SDDEs). It means that the impacts are hardly imposed to each agent immediately. A new type of BSDEs called anticipated BSDEs (ABSDEs) was introduced in \cite{Peng-Yang}, which type of BSDEs can be applied to many fields such as optimal control and finance. Based on it, the problems which depend not only the present
but also the history were solved by \cite{Chen-Wu}. In the consequent works, the FBSDEs with delay and related LQ problems were studied in \cite{Chen-Wu2} and \cite{Chen-Wu-Yu}. A kind of stochastic maximum principle for optimal control problems of delay systems
involving continuous and impulse controls was considered in \cite{Yu}. The forward-backward linear quadratic stochastic optimal control problem
with delay was investigated in \cite{Huang-Li-Shi}. And the maximum principle for optimal control of fully coupled forward stochastic differential delayed equations was derived in \cite{Huang-Shi}. Moreover, some other important phenomena with delay were under consideration in \cite{Zhang1, Zhang2}.

To formulate the above problem mathematically, some SDDE should be introduced to characterize the dynamics of the agents. It is remarkable that there exist rich literature concerning the theories and applications of SDDE.
Generally, the large population problem with delay is under consideration. We discuss the related mean-field LQG games and derive the decentralized strategies. A stochastic process which relates to the delay term of control is introduced here to be the approximation of the control-average process. An auxiliary mean-field SDDE and a AFBSDDE system are considered and analyzed. Here, the AFBSDDE, which is composed by a SDDE and a ABSDE. Further, the AFBSDDE can be divided into two simple AFBSDDEs. In addition, the limit process is related to the wellposedness of a anticipated forward-backward ordinary differential equation with delay  (AFBODDE) and a AFBSDDE. We also derive the $\epsilon$-Nash equilibrium property of decentralized control strategy with $\epsilon=O(1/\sqrt N)$.

The rest of this paper is organized as follows. Section 2 formulates the large population LQG games of forward systems with delay. In Section 3, we derive the limiting optimal controls of the track systems and the consistency conditions. Section 4 is devoted to the related $\epsilon$-Nash equilibrium property. Section 5 gives two special cases in this work.

\section{Problem formulation}

$(\Omega,
\mathcal F, P)$ is a complete probability space on which a standard $(d+m\times N)$-dimensional Brownian motion $\{W^0_t,W^i_t,\ 1\le i\leq N\}_{0 \leq t \leq T}$ is defined, in which a finite time horizon $[0,T]$ is considered for fixed $T>0$. $\mathcal F^{W^0}_t:=\sigma\{W^0_s, 0\leq s\leq t\}$, $\mathcal F^{W^i}_t:=\sigma\{W^i_s, 0\leq s\leq t\}$, $\mathcal F^{i}_t:=\sigma\{W^0_s,W^i_s;0\leq s\leq t\}$. Here, $\{\mathcal F^{W^0}_t\}_{0\leq t\leq T}$ stands for the common information of all players; while $\{\mathcal F^{i}_t\}_{0\leq t\leq T}$ is  the individual information of $i^{th}$ player. Throughout this paper, $\mathbb{R}^n$
denotes the $n$-dimensional Euclidean space, its usual norm $|\cdot|$ and the usual inner product $\langle\cdot, \
\cdot\rangle$. For a given vector or matrix $M$, $M^\top$ stands for its transpose.

Moreover, we denote the spaces of matrices as follows.
\begin{itemize}
\item $S^d$ : the space of all $d\times d$ symmetric matrices.
\item $S^d_+$ : the subspace of all positive semi-definite  matrices of $S^d$.
\item $\hat{S}^d_+$ : the subspace of all positive definite  matrices of $S^d$.
\end{itemize}

For any Euclidean space $\mathbb R^n$, we introduce the following
notations:
\begin{itemize}
\item $L^2_{\mathcal F}(0,T;\mathbb R^n) = \{ g:[0,T]\times\Omega
\rightarrow \mathbb R^n\ |\ g(\cdot)$ is an $\mathbb R^n$-valued
$\mathcal{F}_t$-progressively measurable process such that $\|g\|^2_{L^2_{\mathcal F}}=\mathbb
E\int_0^T |g(t)|^2 dt <\infty\}$.
\item $L^2(0,T;\mathbb R^n) = \{ g:[0,T]
\rightarrow \mathbb R^n\ |\ g(\cdot)$ is an $\mathbb R^n$-valued
deterministic function such that $\|g\|^2_{L^2}=\int_0^{T}|g(t)|^{2}dt<\infty;$\}.
\item $L^\infty(0,T;\mathbb R^n) = \{ g:[0,T]\rightarrow \mathbb R^n\ |\ g(\cdot)$ is an $\mathbb R^n$-valued
uniformly bounded function\}.
\item $C(0,T;\mathbb R^n) = \{g:[0,T]\rightarrow \mathbb R^n\ |\ g(\cdot)$ is $\mathbb R^n$-valued continuous function\}.
\end{itemize}

In this paper, we consider a large population system with $N$ individual agents, denoted by $\{\mathcal{A}_{i}\}_{1 \leq i \leq N}$. The dynamics of $\mathcal{A}_{i}$ satisfies
the following controlled stochastic differential equation with delay (SDDE):
\begin{equation}\label{e1}
\left\{
\begin{aligned}
dx_t^i&=\Big[A_tx^i_t+\widetilde{A}_tx^i_{t-\delta}+B_tu^i_t+\widetilde{B}_tu^i_{t-\theta}+\frac{1}{N-1}
\sum_{j=1,j\neq i}^N\widehat B_tu^j_{t-\theta}\Big]dt+\sigma_tdW^i_t+\sigma^0_tdW^0_t,~t\in[0,T],\\
x^i_0&=a,~~~
x^i_t=\xi^i_t,~~~t\in[-\delta,0),~~~u^i_t=\eta^i_t,~~~t\in[-\theta,0),
\end{aligned}
\right.
\end{equation}
where
 $a$ is the initial state of $\mathcal{A}_i$, $x^i_{t-\delta}$ denotes the individual state delay, $u^i_{t-\theta}$ denotes the individual input or control delay. In addition, $\frac{1}{N-1}
\sum_{j=1,j\neq i}^N\widehat B_tu^j_{t-\theta}$ is introduced to denote the input delay of all other agents, imposed on a given agent $\mathcal{A}_i$. Similar state delay can be found in \cite{Sung}. Here, for simplicity, we assume all agents are statistically identical (homogeneous) in that they share the same coefficients $(A, \widetilde A, B, \widetilde{B},\widehat B, \sigma, \sigma^0)$ and deterministic initial state $a$. The admissible control strategy $u^i\in \mathcal{U}_i$, where$$
\mathcal{U}_i:=\Big\{u^i\big|u^i_t\in L^{2}_{\mathcal{F}^i_t}(0, T; \mathbb{R}^k)\Big\},\ 1\leq i \leq N.
$$
Let $u=(u^1, \cdots, u^{N})$ denotes the set of strategies of all $N$ agents; $u^{-i}=(u^1, \cdots, u^{i-1}$, $u^{i+1}, \cdots, u^{N})$ the strategies set but excluding that of $\mathcal{A}_i,1\leq i\leq N$. Considering the state and control delay, the cost functional for $\mathcal{A}_i,1\leq i\leq N$ is given by
\begin{equation}\label{e2}
\begin{aligned}
\mathcal{J}^i(u^i_t, u^{-i}_t)&=\frac{1}{2}\mathbb{E}\int_0^T\big[\langle
R_tx^i_t,x^i_t\rangle+\langle
\widetilde{R}_tx^i_{t-\delta},x^i_{t-\delta}\rangle+\langle N_tu^i_t,u^i_t\rangle+\langle \widetilde{N}_tu^i_{t-\theta},u^i_{t-\theta}\rangle\big]dt\\
&~~~+\frac{1}{2}\mathbb{E}\langle M
x^i_T,x^i_T\rangle,
\end{aligned}
\end{equation}
where $\widetilde R_t=0,~t\in[T,T+\delta],~\widetilde N_t=0,\ t\in[T,T+\theta]$.

For the coefficients of \eqref{e1} and \eqref{e2}, we set the following assumption:
\begin{description}
  \item[(H1)] $A_t,\widetilde{A}_t\in L^\infty(0,T;\mathbb{R}^{n\times n}),B_t,\widetilde{B}_t,\widehat{B}_t\in L^\infty(0,T;\mathbb{R}^{n\times k}),\sigma_t\in L^2(0,T;\mathbb{R}^{n\times m}),  \sigma^0_t\in L^2(0,T; \\\mathbb{R}^{n\times d}),a\in \mathbb {R}^n$;
  \item[(H2)] $R_t, \widetilde{R}_{t}\in L^\infty(0,T;S^n)$, $N_t, \widetilde{N}_{t}\in L^\infty(0,T; S^k)$, and $R(\cdot)+\widetilde{R}(\cdot+\delta)\in S_+^n$, for some $\delta>0$; $N(\cdot)+\widetilde{N}(\cdot+\theta)\in \hat S^n_+$ and the inverse $(N(\cdot)+\widetilde{N}(\cdot+\theta))^{-1}$ is also bounded  for some $\theta>0$; $M\in S_+^n$.
\end{description}

Now, we formulate the large population dynamic optimization problem with delay.\\

\textbf{Problem (LD).}
Find a control strategies set $\bar{u}=(\bar{u}^1,\cdots,\bar{u}^N)$ which satisfies
$$
\mathcal{J}^i(\bar{u}^i_t,\bar{u}^{-i}_t)=\inf_{u^i\in \mathcal{U}_i}\mathcal{J}^i(u^i_t,\bar{u}^{-i}_t),\ 0\leq i\leq N,
$$where $\bar{u}^{-i}$ represents $(\bar{u}^1,\cdots,\bar{u}^{i-1},\bar{u}^{i+1},\cdots, \bar{u}^N)$, for $1\leq i\leq N$.

\section{The limiting optimal control and Nash certainty equivalence (NCE) equation system}
To study Problem \textbf{(LD)}, an efficient approach is to discuss the associated mean-field games by analyzing the asymptotic behavior when the agent number $N$ tends to infinity. The key ingredient in this approach is to specify some suitable representation of state-average limit. With the help of such limit representation, we can figure out some auxiliary or tracking problem parameterized by the state-average limit. Based on it, the decentralized strategies of individual agents can thus be derived and we can also determine the state-average limit via some consistency condition. Moreover, the approximate Nash equilibrium property can be verified.

Noting that the agents are homogeneous, thus the optimal controls of $\mathcal{A}_i,1\leq i\leq N$ are conditionally independent with identical distribution. Suppose $\frac{1}{N-1}\sum_{j=1, j\neq i}^N\widehat B_tu^j_{t-\theta}$ is approximated by $m_0^{\theta}(t)\in\mathcal F_{t-\theta}^{W^0}$ as $N \rightarrow +\infty.$
Introducing the following auxiliary dynamics of the players,
\begin{equation}\label{e3}
\left\{
\begin{aligned}
d {x}_t^i&=\Big[A_t {x}^i_t+\widetilde{A}_t {x}^i_{t-\delta}
+B_tu^i_t+\widetilde{B}_tu^i_{t-\theta}+m_0^{\theta}(t)\Big]dt+\sigma_tdW^i_t+\sigma^0_tdW^0_t,~t\in[0,T],\\
 {x}^i_0&=a,~~~
 {x}^i_t=\xi^i_t,~~~t\in[-\delta,0),~~~u^i_t=\eta^i_t,~~~t\in[-\theta,0).
\end{aligned}
\right.
\end{equation} The associated limiting cost functional becomes
\begin{equation}\label{e4}
\begin{aligned}
J^i(u^i_t)&=\frac{1}{2}\mathbb{E}\int_0^T\big[\langle
R_t {x}^i_t, {x}^i_t\rangle+\langle
\widetilde{R}_t {x}^i_{t-\delta}, {x}^i_{t-\delta}\rangle+\langle N_tu^i_t,u^i_t\rangle+\langle \widetilde{N}_tu^i_{t-\theta},u^i_{t-\theta}\rangle\big]dt\\
&~~~+\frac{1}{2}\mathbb{E}\langle M
 {x}^i_T, {x}^i_T\rangle.
\end{aligned}
\end{equation}

Thus, we formulate the limiting LQG game with delay \textbf{(LLD)} as follows. \\

\textbf{Problem (LLD).} To find an
admissible control $\bar{u}^i\in \mathcal{U}_i$ for $i^{th}$ agent $\mathcal{A}_i$ satisfying
\begin{equation}\label{e5}
J^i(\bar{u}^i_t)=\inf_{u^i\in \mathcal{U}_i}J^i(u^i_t).
\end{equation}
Such an admissible control $\bar{u}^i$ is called an optimal control,
and $\bar x^i(\cdot)=x^i_{\bar{u}}(\cdot)$ is called the
corresponding optimal trajectory.

We link the Problem (LLD) to a stochastic Hamiltonian system as follows, which is an anticipated stochastic algebra differential
equation system with delay,
\begin{equation}\label{H sys}
\left\{
\begin{aligned}
0&=(N_t+\widetilde N_{t+\theta})\bar{u}^i_t+B_t^\top \bar{y}^i_t+\widetilde B^\top_{t+\theta}\mathbb E^{\mathcal F^i_t}[\bar{y}^i_{t+\theta}],\\
d\bar{x}_t^i&=\Big[A_t\bar{x}^i_t+\widetilde{A}_t\bar{x}^i_{t-\delta}-B_t\bar u^i_t-\widetilde{B}_t\bar u^i_{t-\theta}+m_0^{\theta}(t)\Big]dt+\sigma_tdW^i_t+\sigma^0_tdW^0_t,\\
d\bar{y}_t^i&=-\Big[A^\top_t\bar{y}^i_t+\widetilde{A}^\top_{t+\delta}\mathbb E^{\mathcal F^i_t}[\bar{y}^i_{t+\delta}]+(R_t+\widetilde{R}_{t+\delta})\bar{x}^i_t\Big]dt
+\bar{z}^i_tdW^i_t+\bar{z}^0_tdW^0_t,~t\in[0,T],\\
\bar{x}^i_0&=a,~~~
\bar{x}^i_t=\xi^i_t,~~~t\in[-\delta,0),~~~u^i_t=\eta^i_t,~~~t\in[-\theta,0),\\
\bar{y}^i_T&=M {x}^i_T,~~~
\bar{y}^i_t=0,~~~t\in(T, T+(\delta\vee\theta)].
\end{aligned}
\right.
\end{equation}

To get the optimal control of Problem \textbf{(LLD)}, we have the following theorem.

\begin{theorem}\label{l1}
Let \emph{\textbf{(H1)}}-\emph{\textbf{(H2)}} hold. The sufficient and necessary condition for the  optimal control of $\mathcal{A}_i$ for \emph{\textbf{(LLD)}} is that $u^i_t$ has the following form
\begin{equation}\label{e6}
   \bar{u}^i_t=-(N_t+\widetilde N_{t+\theta})^{-1}\big(B_t^\top \bar{y}^i_t+\widetilde B^\top_{t+\theta}\mathbb E^{\mathcal F^i_t}[\bar{y}^i_{t+\theta}]\big).
\end{equation}
Moreover, for any given $m_0^{\theta}(t)\in L^2_{\mathcal F_{t-\theta}^{W^0}}(-\theta,T+(\delta\vee\theta);\mathbb R^n)$, the stochastic Hamiltonian system \eqref{H sys} admits a unique solution
 $(\bar{x}_t^i, \bar u_t^i, \bar{y}_t^i,\bar{z}^i_t,\bar{z}^0_t)\in L^2_{\mathcal F^{i}_t}(-\delta,T;\mathbb R^n)\times \mathcal U_i\times L^2_{\mathcal F^{i}_t}(-\theta,T+(\delta\vee\theta);\mathbb R^n)\times L^2_{\mathcal F^{i}_t}(0, T;\mathbb R^{n\times m})\times L^2_{\mathcal F^{i}_t}(0,T;\mathbb R^{n\times d})$.
\end{theorem}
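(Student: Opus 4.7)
My plan is first to characterize optimality by a convex perturbation argument on \eqref{e4}, and then to establish well-posedness of the closed-loop anticipated forward-backward system \eqref{H sys} by the continuation scheme alluded to in the abstract. For the first step I would fix any $v\in\mathcal U_i$ with $v_t = 0$ on $[-\theta,0)$ so that $u^\epsilon := \bar u^i + \epsilon v$ is admissible; by linearity of \eqref{e3} the perturbed trajectory decomposes as $x^\epsilon = \bar x^i + \epsilon \widetilde x$, where $\widetilde x$ satisfies the noise-free variational SDDE
\[
d\widetilde x_t = \bigl[A_t\widetilde x_t + \widetilde A_t\widetilde x_{t-\delta} + B_t v_t + \widetilde B_t v_{t-\theta}\bigr] dt, \quad \widetilde x_0 = 0, \quad \widetilde x_t = 0 \text{ on }[-\delta,0).
\]
Differentiating \eqref{e4} at $\epsilon=0$ and performing the time shifts $t\mapsto t+\delta$, $t\mapsto t+\theta$ (legitimate because $\widetilde R_t \equiv 0$ on $[T,T+\delta]$ and $\widetilde N_t \equiv 0$ on $[T,T+\theta]$) produces the first-order condition
\[
0 = \mathbb E\!\int_0^T\!\Bigl[\bigl\langle(R_t+\widetilde R_{t+\delta})\bar x^i_t,\widetilde x_t\bigr\rangle + \bigl\langle(N_t+\widetilde N_{t+\theta})\bar u^i_t,v_t\bigr\rangle\Bigr]dt + \mathbb E\bigl\langle M\bar x^i_T,\widetilde x_T\bigr\rangle.
\]

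\textbf{Duality and the optimality condition.} Next I would introduce the adjoint $\bar y^i$ as the solution of the anticipated BSDE in \eqref{H sys} with $\bar y^i_T = M\bar x^i_T$ and $\bar y^i_t = 0$ on $(T, T+(\delta\vee\theta)]$, whose existence and uniqueness follow from \cite{Peng-Yang}. Applying It\^o to $\langle\bar y^i_t,\widetilde x_t\rangle$ cancels the $A_t$-drifts, while the Peng-Yang duality
\[
\mathbb E\!\int_0^T\!\bigl\langle\widetilde A_{t+\delta}^\top\mathbb E^{\mathcal F^i_t}[\bar y^i_{t+\delta}],\widetilde x_t\bigr\rangle dt = \mathbb E\!\int_0^T\!\bigl\langle \bar y^i_t,\widetilde A_t\widetilde x_{t-\delta}\bigr\rangle dt,
\]
together with the analogous identity involving $\widetilde B_{t+\theta}^\top\mathbb E^{\mathcal F^i_t}[\bar y^i_{t+\theta}]$ for the control-delay term, eliminates the delayed/anticipated cross-terms. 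After integration the first variation collapses to
\[
0 = \mathbb E\!\int_0^T\!\Bigl\langle(N_t+\widetilde N_{t+\theta})\bar u^i_t + B_t^\top\bar y^i_t + \widetilde B_{t+\theta}^\top\mathbb E^{\mathcal F^i_t}[\bar y^i_{t+\theta}],\,v_t\Bigr\rangle dt,
\]
so the arbitrariness of $v$ together with invertibility of $N_t+\widetilde N_{t+\theta}$ guaranteed by \textbf{(H2)} gives \eqref{e6}. Conversely, \textbf{(H2)} makes $J^i$ convex in $u^i$, so this necessary condition is also sufficient.

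\textbf{Well-posedness and principal difficulty.} Substituting \eqref{e6} back into \eqref{H sys} produces a fully coupled AFBSDDE for $(\bar x^i,\bar y^i,\bar z^i,\bar z^0)$ with a delay at $t-\delta$ in the forward drift and anticipations at $t+\delta$ and $t+\theta$ in the backward drift. I would solve it by the continuation method: introduce an interpolation parameter $\alpha\in[0,1]$ scaling the forward-backward coupling, with $\alpha=0$ yielding a decoupled, trivially solvable pair of linear SDDE and linear anticipated BSDE, and $\alpha=1$ the target system, and then show that the set of $\alpha$ for which the $\alpha$-system is uniquely solvable in the stated $L^2$-spaces is non-empty, open and closed. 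This step is the principal difficulty: the required uniform a priori estimate is obtained by applying It\^o to $\langle\bar x^i_t,\bar y^i_t\rangle$ and pairing the anticipated and delayed cross-terms via the same Peng-Yang duality, but the state delay lives on $[-\delta,0)$ while the adjoint anticipation lives on $(T,T+(\delta\vee\theta)]$ on a different conditional filtration, so the cross-terms do not cancel pointwise; one must carefully bookkeep the boundary contributions from $\xi^i$, $\eta^i$ and $m_0^\theta$. Once those contributions are shown to depend only on the given data (hence are absorbable constants), the monotonicity $R+\widetilde R_{\cdot+\delta}\in S^n_+$, $M\in S^n_+$ together with $(N+\widetilde N_{\cdot+\theta})^{-1}\in L^\infty$ supplies the coercivity needed to close the continuation argument and deliver existence, uniqueness and the claimed $L^2$-regularity.
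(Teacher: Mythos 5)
Your proposal follows essentially the same route as the paper, which disposes of the necessity/sufficiency part by invoking the variational calculus and dual representation behind the stochastic maximum principle of Yu \cite{Yu}, and of the well-posedness part by citing the continuation method of \cite{HP,PengWu99} as carried out in the Appendix of \cite{Chen-Wu-Yu}; you have simply written out the details that the paper delegates to those references. Your perturbation--duality computation and the monotonicity-based continuation scheme are both correct (note only that the delayed and anticipated cross-terms do cancel in the integrated sense thanks to the zero extensions $\widetilde x\equiv 0$ on $[-\delta,0)$ and $\bar y^i\equiv 0$ on $(T,T+(\delta\vee\theta)]$, so the bookkeeping there is lighter than you suggest).
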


\begin{proof}
The sufficient and  necessary condition  part could be  from some variational calculus and dual representation, which is a straightforward consequence of the stochastic maximum principle in Yu \cite{Yu}. We omit the proof.

Moreover, under assumption \textbf{(H2)}, by the form of \eqref{e6}, our problem is to solve the following fully-coupled AFBSDDE,
\begin{equation}\label{e7}
\left\{
\begin{aligned}
d\bar{x}_t^i&=\Big[A_t\bar{x}^i_t+\widetilde{A}_t\bar{x}^i_{t-\delta}-B_t(N_t+\widetilde N_{t+\theta})^{-1}\big(B_t^\top \bar{y}^i_t+\widetilde B^\top_{t+\theta}\mathbb E^{\mathcal F^i_t}[\bar{y}^i_{t+\theta}]\big)\\
&~~~-\widetilde{B}_t(N_{t-\theta}+\widetilde N_{t})^{-1}\big(B_{t-\theta}^\top \bar{y}^i_{t-\theta}+\widetilde B_{t}\mathbb E^{\mathcal F^i_{t-\theta}}[\bar{y}^i_{t}]\big)+m_0^{\theta}(t)\Big]dt+\sigma_tdW^i_t+\sigma^0_tdW^0_t,\\
d\bar{y}_t^i&=-\Big[A^\top_t\bar{y}^i_t+\widetilde{A}^\top_{t+\delta}\mathbb E^{\mathcal F^i_t}[\bar{y}^i_{t+\delta}]+(R_t+\widetilde{R}_{t+\delta})\bar{x}^i_t\Big]dt
+\bar{z}^i_tdW^i_t+\bar{z}^0_tdW^0_t,~t\in[0,T],\\
\bar{x}^i_0&=a,~~~
\bar{x}^i_t=\xi^i_t,~~~t\in[-\delta,0),\\
\bar{y}^i_T&=M\bar{x}^i_T,~~~
\bar{y}^i_t=0,~~~t\in(T, T+(\delta\vee\theta)].
\end{aligned}
\right.
\end{equation}
Applying the classic ``continuation method" which  was proposed in \cite{HP}, \cite{PengWu99}, the proof is similar as in the Appendix of \cite{Chen-Wu-Yu}, the above linear AFBSDDE \eqref{e7} has a unique solution. So the Hamiltonian system (\ref{H sys}) admits a unique solution.
\end{proof}

For the further studying, consider the following two AFBSDDEs which are fully-coupled in states,
\begin{equation}\label{e8}
\left\{
\begin{aligned}
dx_t^{i,1}&=\Big[A_tx^{i,1}_t+\widetilde{A}_tx^{i,1}_{t-\delta}-B_t(N_t+\widetilde N_{t+\theta})^{-1}\big(B_t^\top y^{i,1}_t+\widetilde B^\top_{t+\theta}\mathbb E^{\mathcal F^{W^i}_t}[y^{i,1}_{t+\theta}]\big)\\
&~~~-\widetilde{B}_t(N_{t-\theta}+\widetilde N_{t})^{-1}\big(B_{t-\theta}^\top y^{i,1}_{t-\theta}+\widetilde B_{t}^\top\mathbb E^{\mathcal F^{W^i}_{t-\theta}}[y^{i,1}_{t}]\big)\Big]dt+\sigma_tdW^i_t,\\
dy_t^{i,1}&=-\Big[A^\top_ty^{i,1}_t+\widetilde{A}^\top_{t+\delta}\mathbb E^{\mathcal F^{W^i}_t}[y^{i,1}_{t+\delta}]+(R_t+\widetilde{R}_{t+\delta})x^{i,1}_t\Big]dt+z^i_tdW^i_t,~t\in[0,T],\\
x^{i,1}_0&=a^{i,1},~~~x^{i,1}_t=\xi^{i,1}_t,~~~t\in[-\delta,0),\\
y^{i,1}_T&=Mx^{i,1}_T,~~~y^{i,1}_t=0,~~~t\in(T, T+(\delta\vee\theta)],
\end{aligned}
\right.
\end{equation}
and
\begin{equation}\label{e9}
\left\{
\begin{aligned}
dx_t^2&=\Big[A_tx^2_t+\widetilde{A}_tx^2_{t-\delta}-B_t(N_t+\widetilde N_{t+\theta})^{-1}\big(B_t^\top y^2_t+\widetilde B^\top_{t+\theta}\mathbb E^{\mathcal F^{W^0}_t}[y^2_{t+\theta}]\big)\\
&~~~-\widetilde{B}_t(N_{t-\theta}+\widetilde N_{t})^{-1}\big(B_{t-\theta}^\top y^2_{t-\theta}+\widetilde B^\top_{t}\mathbb E^{\mathcal F^{W^0}_{t-\theta}}[y^2_{t}]\big)+m_0^{\theta}(t)\Big]dt+\sigma^0_tdW^0_t,\\
dy_t^2&=-\Big[A^\top_ty^2_t+\widetilde{A}^\top_{t+\delta}\mathbb E^{\mathcal F^{W^0}_t}[y^2_{t+\delta}]+(R_t+\widetilde{R}_{t+\delta})x^2_t\Big]dt+z^0_tdW^0_t,~t\in[0,T],\\
x^2_0&=a^2,~~~x^2_t=\xi^2_t,~~~t\in[-\delta,0),\\
y^2_T&=Mx^2_T,~~~y^2_t=0,~~~t\in(T, T+(\delta\vee\theta)],
\end{aligned}
\right.
\end{equation}
where $a^i=a^{i,1}+a^2$, $\xi_t^i=\xi_t^{i,1}+\xi_t^2$. It follows from the Appendix in \cite{Chen-Wu-Yu} that \eqref{e8} and \eqref{e9} admit the unique solutions $(x_t^{i,1},y_t^{i,1},z^i_t)\in L^2_{\mathcal F^{W^i}_t}(-\delta,T;\mathbb R^n)\times L^2_{\mathcal F^{W^i}_t}(-\theta,T+(\delta\vee\theta);\mathbb R^n)\times L^2_{\mathcal F^{W^i}_t}(0,T;\mathbb R^{n\times m})$ and $(x^2_t,y^2_t,z^0_t)\in L^2_{\mathcal F^{W^0}_t}(-\delta,T;\mathbb R^n)\times L^2_{\mathcal F^{W^0}_t}(-\theta,T+(\delta\vee\theta);\mathbb R^n)\times L^2_{\mathcal F^{W^0}_t}(0,T;\mathbb R^{n\times d})$. Then we have the following lemma.
\begin{lemma}\label{l2}
Let \emph{\textbf{(H1)-(H2)}} hold, if $(x_t^{i,1}, y_t^{i,1}, z^i_t)$ is the solution of \eqref{e8} and $(x_t^2, y_t^2, z_t^0)$ is the solution of \eqref{e9}, then   $(x_t^{i,1}+x_t^2, y_t^{i,1}+y_t^2, z^i_t, z_t^0)$ is the solution of \eqref{e7}.
\end{lemma}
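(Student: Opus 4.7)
The plan is to prove the statement by direct linear superposition: set $X_t := x_t^{i,1}+x_t^2$ and $Y_t := y_t^{i,1}+y_t^2$, keep $z_t^i$ and $z_t^0$ as the two diffusion coefficients, and verify that this quadruple satisfies system \eqref{e7}. First I would check the boundary data: the initial conditions add up to $X_0 = a^{i,1}+a^2 = a$ and $X_t = \xi_t^{i,1}+\xi_t^2 = \xi_t^i$ on $[-\delta,0)$, while the anticipated backward data give $Y_T = M(x_T^{i,1}+x_T^2) = MX_T$ and $Y_t = 0$ on $(T, T+(\delta\vee\theta)]$. These identifications follow immediately from the decomposition $a = a^{i,1}+a^2$ and $\xi^i = \xi^{i,1}+\xi^2$ declared just below \eqref{e9}. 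The diffusion terms match by construction: adding the two forward dynamics produces $\sigma_t\,dW_t^i + \sigma_t^0\,dW_t^0$, and likewise on the backward side we obtain $z_t^i\,dW_t^i + z_t^0\,dW_t^0$, exactly as in \eqref{e7}.

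Next I would add the drifts term by term. Linearity handles the coefficients $A_t$, $\widetilde{A}_t$, $R_t+\widetilde R_{t+\delta}$, and the forcing $m_0^\theta(t)$ from \eqref{e9} at once. The only non-routine step is the treatment of the conditional-expectation terms, because the two subsystems use different filtrations. Specifically, I must show
\[
\mathbb{E}^{\mathcal F^i_t}[Y_{t+\theta}] = \mathbb{E}^{\mathcal F^{W^i}_t}[y^{i,1}_{t+\theta}] + \mathbb{E}^{\mathcal F^{W^0}_t}[y^2_{t+\theta}],
\]
and the analogous identities at the shifts $t+\delta$ and $t-\theta$. Once these hold, combining them with $B_t^\top$, $\widetilde B_{t+\theta}^\top$, $\widetilde B_t$, $(N_t+\widetilde N_{t+\theta})^{-1}$, etc., reproduces every nonlinear-looking piece of \eqref{e7} as the sum of the corresponding pieces of \eqref{e8} and \eqref{e9}.

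The main (only) obstacle is justifying the filtration reduction just stated; this is where the structure of the Brownian motions matters. The key observation is that by the definitions on page 3, $W^0$ and $W^i$ are independent components of the same multidimensional Brownian motion, so $\mathcal F^{W^0}_T$ is independent of $\mathcal F^{W^i}_T$ and $\mathcal F^i_t = \mathcal F^{W^0}_t \vee \mathcal F^{W^i}_t$. Since the solution $y^{i,1}_{t+\theta}$ of \eqref{e8} is $\mathcal F^{W^i}_{t+\theta}$-measurable and hence independent of $\mathcal F^{W^0}_t$, a standard independence lemma for conditional expectations yields $\mathbb{E}^{\mathcal F^i_t}[y^{i,1}_{t+\theta}] = \mathbb{E}^{\mathcal F^{W^i}_t}[y^{i,1}_{t+\theta}]$; symmetrically $\mathbb{E}^{\mathcal F^i_t}[y^2_{t+\theta}] = \mathbb{E}^{\mathcal F^{W^0}_t}[y^2_{t+\theta}]$ because $y^2_{t+\theta}$ is $\mathcal F^{W^0}_{t+\theta}$-measurable. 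The same argument applies with $\theta$ replaced by $\delta$ and with the shift $t-\theta$ in place of $t$.

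With these identities in hand, summing \eqref{e8} and \eqref{e9} yields exactly \eqref{e7} written in terms of $(X_t,Y_t,z_t^i,z_t^0)$, and the triples on the right side already live in the solution spaces stated after \eqref{e9}, so the sum lies in the space specified in Theorem \ref{l1}. By the uniqueness asserted in Theorem \ref{l1}, $(X_t,Y_t,z_t^i,z_t^0) = (\bar x_t^i,\bar y_t^i,\bar z_t^i,\bar z_t^0)$, which proves the lemma.
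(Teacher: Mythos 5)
Your proposal is correct and follows essentially the same route as the paper, which simply asserts that the superposition $(\bar x^i_t,\bar y^i_t,\bar z^i_t,\bar z^0_t)=(x^{i,1}_t+x^2_t,\,y^{i,1}_t+y^2_t,\,z^i_t,\,z^0_t)$ is ``easily checked'' to solve \eqref{e7}. The one substantive point you make explicit --- that $\mathbb E^{\mathcal F^i_t}[y^{i,1}_{t+\theta}]=\mathbb E^{\mathcal F^{W^i}_t}[y^{i,1}_{t+\theta}]$ and $\mathbb E^{\mathcal F^i_t}[y^{2}_{t+\theta}]=\mathbb E^{\mathcal F^{W^0}_t}[y^{2}_{t+\theta}]$ by independence of $W^i$ and $W^0$ --- is exactly the identity the paper itself records later as \eqref{e10}, so your write-up is a faithful (and more complete) version of the intended argument.
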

\begin{proof}
It is easily to check that $\bar{x}_t^i=x_t^{i,1}+x_t^2$, $\bar{y}_t^i=y_t^{i,1}+y_t^2$, $\bar z_t^i=z_t^i$ and $\bar z^0_t=z^0_t$ are the solutions of AFBSDDE \eqref{e7}, then we can get the conclusion.
\end{proof}

In the following part, we will point out the essence of the limiting stochastic process $m_0^{\theta}(t)$.
Firstly, we introduce the following AFBODDE and AFBSDDE,
\begin{equation}\label{e12}
\left\{
\begin{aligned}
d[\mathbb E x_t^{1}]&=\Big[A_t[\mathbb Ex_t^{1}]+\widetilde{A}_t[\mathbb E x_{t-\delta}^{1}]-B_t(N_t+\widetilde N_{t+\theta})^{-1}\big(B_t^\top [\mathbb E y_t^{1}]+\widetilde B_{t+\theta}^\top[\mathbb E y_{t+\theta}^{1}]\big)\\
&~~~-\widetilde{B}_t(N_{t-\theta}+\widetilde N_{t})^{-1}\big(B_{t-\theta}^\top [\mathbb E y_{t-\theta}^{1}]+\widetilde B_{t}^\top[\mathbb E y^{1}_{t}]\big)\Big]dt,\\
d[\mathbb Ey_t^{1}]&=-\Big[A^\top_t[\mathbb Ey_t^{1}]+\widetilde{A}^\top_{t+\delta}[\mathbb Ey_{t+\delta}^{1}]+(R_t+\widetilde{R}_{t+\delta})[\mathbb Ex_t^{1}]\Big]dt,~t\in[0,T],\\
\mathbb E x^{1}_0&=a^{1},~~~~\mathbb Ex^{1}_t=\mathbb E\xi^{1}_t,~~~t\in[-\delta,0),\\
\mathbb Ey^{1}_T&=M[\mathbb Ex^{1}_T],~~~~\mathbb Ey^{1}_t=0,~~~t\in(T, T+(\delta\vee\theta)],
\end{aligned}
\right.
\end{equation}
and
\begin{equation}\label{e13}
\left\{
\begin{aligned}
dx_t^2&=\Big[A_tx^2_t+\widetilde{A}_tx^2_{t-\delta}-B_t(N_t+\widetilde N_{t+\theta})^{-1}\big(B_t^\top y^2_t+\widetilde B^\top_{t+\theta}\mathbb E^{\mathcal F^{W^0}_t}[y^2_{t+\theta}]\big)\\
&~~~-(\widetilde{B}_t+\widehat B_t)(N_{t-\theta}+\widetilde N_{t})^{-1}\big(B_{t-\theta}^\top y^2_{t-\theta}+\widetilde B_{t}^\top\mathbb E^{\mathcal F^{W^0}_{t-\theta}}[y^2_{t}]\big)\\
&~~~-\widehat B_t(N_{t-\theta}+\widetilde N_{t})^{-1}\big(B_{t-\theta}^\top[\mathbb E y^{1}_{t-\theta}]+\widetilde B_t^\top [\mathbb E y_t^{1}]\big)\Big]dt+\sigma^0_tdW^0_t,\\
dy_t^2&=-\Big[A^\top_ty^2_t+\widetilde{A}^\top_{t+\delta}\mathbb E^{\mathcal F^{W^0}_t}[y^2_{t+\delta}]+(R_t+\widetilde{R}_{t+\delta})x^2_t\Big]dt+z^0_tdW^0_t,~t\in[0,T],\\
x^2_0&=a^2,~~~x^2_t=\xi^2_t,~~~t\in[-\delta,0),\\
y^2_T&=Mx^2_T,~~~
y^2_t=0,~~~t\in(T, T+(\delta\vee\theta)].
\end{aligned}
\right.
\end{equation}

\begin{proposition}
$m_0^{\theta}(t)$ is in $L^2_{\mathcal F_t^{W^0}}(-\theta, T+(\delta\vee\theta); \mathbb R^n)$ and it is of the following form,
\[
\begin{aligned}
m_0^{\theta}(t)
&=-\widehat B_t(N_{t-\theta}+\widetilde N_{t})^{-1}\big[B_{t-\theta}^\top[\mathbb E y^{1}_{t-\theta}]+\widetilde B_t^\top[\mathbb E y_t^{1}]\big]\\
&\qquad-\widehat B_t(N_{t-\theta}+\widetilde N_{t})^{-1}\big[B_{t-\theta}^\top y^2_{t-\theta}+\widetilde B_t^\top\mathbb E^{\mathcal F_{t-\theta}^{W^0}}[y_t^2]\big],\\
\end{aligned}
\]
where $y_t^{1}$ is the solution of \eqref{e12} and $y_t^2$ is the solution of \eqref{e13}
\end{proposition}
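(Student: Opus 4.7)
The plan is to identify $m_0^{\theta}(t)$ as the $N \to \infty$ limit of the empirical control average $\frac{1}{N-1}\sum_{j\neq i}\widehat{B}_t \bar{u}^j_{t-\theta}$ by plugging in the optimal feedback supplied by Theorem \ref{l1} together with the adjoint decomposition of Lemma \ref{l2}, then invoking a conditional law of large numbers. This converts the implicit consistency requirement into an explicit formula and simultaneously verifies the announced measurability.

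Concretely, I would shift \eqref{e6} to time $t-\theta$ to obtain
\[
\widehat{B}_t \bar{u}^j_{t-\theta} = -\widehat{B}_t(N_{t-\theta}+\widetilde{N}_{t})^{-1}\bigl(B^\top_{t-\theta}\bar{y}^j_{t-\theta}+\widetilde{B}^\top_t\mathbb{E}^{\mathcal{F}^j_{t-\theta}}[\bar{y}^j_t]\bigr),
\]
and then decompose $\bar{y}^j_s = y^{j,1}_s + y^2_s$ as in Lemma \ref{l2}. Since $y^{j,1}$ is driven only by $W^j$ (system \eqref{e8}) and $y^2$ is driven only by $W^0$ (system \eqref{e9}), the mutual independence of $\{W^0, W^1, \ldots, W^N\}$ yields $\mathbb{E}^{\mathcal{F}^j_{t-\theta}}[y^{j,1}_t]=\mathbb{E}^{\mathcal{F}^{W^j}_{t-\theta}}[y^{j,1}_t]$ and $\mathbb{E}^{\mathcal{F}^j_{t-\theta}}[y^2_t]=\mathbb{E}^{\mathcal{F}^{W^0}_{t-\theta}}[y^2_t]$. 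The $y^2$-piece contributes the common-noise term
\[
-\widehat{B}_t(N_{t-\theta}+\widetilde{N}_t)^{-1}\bigl(B_{t-\theta}^\top y^2_{t-\theta}+\widetilde{B}_t^\top \mathbb{E}^{\mathcal{F}^{W^0}_{t-\theta}}[y^2_t]\bigr),
\]
which is independent of $j$ and passes through the empirical average unchanged. The $y^{j,1}$-piece consists of i.i.d.\ copies (driven by independent Brownian motions $W^j$, all independent of $W^0$), so an $L^2$ law of large numbers delivers
\[
\frac{1}{N-1}\sum_{j\neq i}\bigl(B^\top_{t-\theta}y^{j,1}_{t-\theta}+\widetilde{B}^\top_t\mathbb{E}^{\mathcal{F}^{W^j}_{t-\theta}}[y^{j,1}_t]\bigr) \xrightarrow{L^2} B^\top_{t-\theta}\mathbb{E}[y^1_{t-\theta}] + \widetilde{B}^\top_t\mathbb{E}[y^1_t],
\]
where $\mathbb{E}[y^1_\cdot]$ is the deterministic expectation obtained by applying $\mathbb{E}$ to \eqref{e8}, i.e.\ precisely the AFBODDE system \eqref{e12}. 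Multiplying by $-\widehat{B}_t(N_{t-\theta}+\widetilde{N}_t)^{-1}$ and summing the two contributions yields the claimed representation.

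Measurability and integrability are then immediate: the $\mathbb{E}[y^1]$-summand is deterministic and the $y^2$-summand is $\mathcal{F}^{W^0}_{t-\theta}$-adapted, so $m_0^{\theta}$ lies in $L^2_{\mathcal{F}^{W^0}_t}(-\theta,T+(\delta\vee\theta);\mathbb{R}^n)$, with the norm bound following from standard energy estimates for \eqref{e12} and \eqref{e13} under (H1)–(H2). The main obstacle is the LLN step, which requires uniform-in-$j$ $L^2$ control on $y^{j,1}_{t-\theta}$ and on the anticipated conditional expectation $\mathbb{E}^{\mathcal{F}^{W^j}_{t-\theta}}[y^{j,1}_t]$; these bounds come from the continuation-method a priori estimates underlying Lemma \ref{l2}. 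A small subtlety worth recording is that substituting the derived formula back into \eqref{e9} reduces that equation to \eqref{e13}, so the $y^2$ featured in the final expression is consistently the one defined by \eqref{e13}, thereby closing the decomposition self-consistently.
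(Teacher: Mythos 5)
Your proposal follows essentially the same route as the paper's own proof: substitute the optimal feedback \eqref{e6} shifted to $t-\theta$ into the empirical average, decompose $\bar{y}^j=y^{j,1}+y^2$ via Lemma \ref{l2}, use the independence of the driving Brownian motions to reduce the conditional expectations as in \eqref{e10}, and pass to the limit by a law of large numbers so that the idiosyncratic part becomes the deterministic mean solving \eqref{e12} while the common-noise part survives unchanged. Your added remarks on the uniform $L^2$ bounds needed for the LLN and on the self-consistency of \eqref{e13} are sensible refinements of details the paper leaves implicit, but they do not change the argument.
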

\begin{proof}
 It follows from \eqref{e8} and \eqref{e9} that $y^{j,1}_{t}$ is independent of $W^0_t$, $y^{2}_{t}$ is independent of $W^j_t$, for $1\leq j\leq N$, respectively. Thus, we have
\begin{equation}\label{e10}
\mathbb E^{\mathcal F^j_{t-\theta}}[y^{j,1}_{t}]=\mathbb E^{\mathcal F^{W^j}_{t-\theta}}[y^{j,1}_{t}],\quad \mathbb E^{\mathcal F^j_{t-\theta}}[y^{2}_{t}]=\mathbb E^{\mathcal F^{W^0}_{t-\theta}}[y^{2}_{t}].\end{equation}
By virtue of Lemma \ref{l2}, we obtain
\begin{equation}\label{e11}
\begin{aligned}
m_0^{\theta}(t)&=\lim_{N\rightarrow\infty}\widehat B_t\frac{1}{N-1}\sum_{j=1,j\neq i}^{N}\bar{u}^j_{t-\theta}\\
&=-\widehat B_t\lim_{N\rightarrow\infty}\frac{1}{N-1}\sum_{j=1,j\neq i}^{N}(N_{t-\theta}+\widetilde N_{t})^{-1}(B_{t-\theta}^\top \bar {y}^j_{t-\theta}+\widetilde B_{t}^\top\mathbb E^{\mathcal F^j_{t-\theta}}[\bar{y}^j_{t}])\\
&=-\widehat B_t(N_{t-\theta}+\widetilde N_{t})^{-1}\lim_{N\rightarrow\infty}\frac{1}{N-1}\sum_{j=1,j\neq i}^{N}(B_{t-\theta}^\top (y^{j,1}_{t-\theta}+y^2_{t-\theta})+\widetilde B_{t}^\top\mathbb E^{\mathcal F^j_{t-\theta}}[(y^{j,1}_{t}+y^2_{t})])\\
&=-\widehat B_t(N_{t-\theta}+\widetilde N_{t})^{-1}\Big[\lim_{N\rightarrow\infty}\frac{1}{N-1}\sum_{j=1,j\neq i}^{N}(B_{t-\theta}^\top y^{j,1}_{t-\theta}+\widetilde B_{t}^\top\mathbb E^{\mathcal F^{W^j}_{t-\theta}}[y^{j,1}_{t}])\\
&\qquad+\lim_{N\rightarrow\infty}\frac{1}{N-1}\sum_{j=1,j\neq i}^{N}(B_{t-\theta}^\top y_{t-\theta}^2+\widetilde B_{t}^\top\mathbb E^{\mathcal F^{W^0}_{t-\theta}}[y^2_{t}])\Big]\\
&=-\widehat B_t(N_{t-\theta}+\widetilde N_{t})^{-1}\big[B_{t-\theta}^\top[\mathbb E y^{j,1}_{t-\theta}]+\widetilde B_t^\top[\mathbb E y_t^{j,1}]\big]\\
&\qquad-\widehat B_t(N_{t-\theta}+\widetilde N_{t})^{-1}\big[B_{t-\theta}^\top y^2_{t-\theta}+\widetilde B_t^\top\mathbb E^{\mathcal F_{t-\theta}^{W^0}}[y_t^2]\big]\\
&:=\Sigma_1^\theta(t)+\Sigma_2^\theta(t).
\end{aligned}
\end{equation}
Here, $\Sigma_1^\theta(t)=-\widehat B_t(N_{t-\theta}+\widetilde N_{t})^{-1}\big[B_{t-\theta}^\top[\mathbb E y^{j,1}_{t-\theta}]+\widetilde B_t^\top[\mathbb E y_t^{j,1}]\big]$, which is the deterministic function; $\Sigma_2^\theta(t)=-\widehat B_t(N_{t-\theta}+\widetilde N_{t})^{-1}\big[B_{t-\theta}^\top y^2_{t-\theta}+\widetilde B_t^\top\mathbb E^{\mathcal F_{t-\theta}^{W^0}}[y_t^2]\big]$ is in $L^2_{\mathcal F_t^{W^0}}(-\theta, T+(\delta\vee\theta); \mathbb R^n)$.
For $\mathbb E y^{i,1}_{t}=\mathbb E y^{j,1}_{t}$, for $i\neq j,\ i,j=1,2,\cdots,N$, so $\mathbb E y^{i,1}_{t}$ is independent on $i$, then we denote $\mathbb E y^{i,1}_{t}=\mathbb E y^{1}_{t}$,  where $y^{1}_{t}$ is the solution of \eqref{e12}.
 Thus $\Sigma_1^\theta(t)$ can be rewritten as $$\Sigma_1^\theta(t)=-\widehat B_t(N_{t-\theta}+\widetilde N_{t})^{-1}\big[B_{t-\theta}^\top[\mathbb E y^{1}_{t-\theta}]+\widetilde B_t^\top[\mathbb E y_t^{1}]\big].$$
Hence the result.
\end{proof}
\begin{remark}In what follows \eqref{e12}-\eqref{e13} are called the Nash certainty equivalence (NCE) equation system which can be used to determine the control state-average limit $m_0(t)$.
Note that $m_0(t)$ plays an important role due to the dependence of decentralized strategy $\bar{u}_i(t)$ on it. We can see that $\bar u_t^i$ in \eqref{e6} is dependent on the solution  $\bar y_t^i$ and $\bar y_{t+\theta}^i$ of \eqref{H sys}, and $\bar y_t^i$, $\bar y_{t+\theta}^i$ are dependent on $m_0(t)$.
\end{remark}

\section{$\epsilon$-Nash equilibrium analysis}
In above sections, we obtained the optimal control $\bar{u}^i_t, 1\le i\le N$ of Problem (\textbf{LLD}) through the consistency condition system. Now, we turn to verify the $\epsilon$-Nash equilibrium of Problem (\textbf{LD}). To start, we first present the definition of $\epsilon$-Nash equilibrium.
\begin{definition}\label{d1}
A set of controls $u_t^i\in \mathcal{U}_i,\ 1\leq i\leq N,$ for $N$ agents is called to satisfy an $\epsilon$-Nash equilibrium with respect to the costs $J^i,\ 1\leq i\leq N,$ if there exists $\epsilon\geq0$ such that for any fixed $1\leq i\leq N$, we have
\begin{equation}\label{e14}
J^i(\bar{u}_t^i,\bar{u}_t^{-i})\leq J^i(u_t^i,\bar{u}_t^{-i})+\epsilon,
\end{equation}
when any alternative control $u^i\in \mathcal{U}_i$ is applied by $\mathcal{A}_i$.
\end{definition}
\begin{remark}
If $\epsilon=0,$ then Definition \ref{d1} is reduced to the usual Nash equilibrium.
\end{remark}

Now, we state the main result of this paper and its proof will be given later.
\begin{theorem}\label{t2}
Under \emph{\textbf{(H1)-(H2)}}, $(\bar{u}_t^1,\bar{u}_t^2,\cdots,\bar{u}_t^N)$ satisfies the $\epsilon$-Nash equilibrium of \emph{(\textbf{LD})}. Here, for $1\le i\le N,$ $\bar{u}_t^i$ is given by
\eqref{e6}.
\end{theorem}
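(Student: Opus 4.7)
The strategy is the standard sandwich: bound the actual cost $\mathcal{J}^i(\cdot,\bar u^{-i})$ above and below by the limiting cost $J^i(\cdot)$ up to an error of order $O(1/\sqrt N)$, then invoke the optimality of $\bar u^i$ for Problem (LLD) established in Theorem \ref{l1}. Concretely, for any alternative $u^i\in\mathcal{U}_i$ I would chain
\[
\mathcal{J}^i(\bar u^i,\bar u^{-i})\le J^i(\bar u^i)+\epsilon_1\le J^i(u^i)+\epsilon_1\le \mathcal{J}^i(u^i,\bar u^{-i})+\epsilon_1+\epsilon_2,
\]
which yields \eqref{e14} with $\epsilon=\epsilon_1+\epsilon_2=O(1/\sqrt N)$.

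The pivotal estimate is the control-average approximation
\[
\mathbb{E}\int_0^T\Bigl|\frac{1}{N-1}\sum_{j=1,j\neq i}^N\widehat B_t\bar u^j_{t-\theta}-m_0^\theta(t)\Bigr|^2 dt = O(1/N).
\]
I would substitute \eqref{e6} for each $\bar u^j_{t-\theta}$ and then use Lemma \ref{l2} to split $\bar y^j=y^{j,1}+y^2$. The $y^2$ summands are common across $j$ and, via the filtration identity \eqref{e10}, reproduce $\Sigma_2^\theta(t)$ exactly; the $y^{j,1}$ summands are i.i.d. and independent of $W^0$, so the elementary variance bound $\mathrm{Var}\bigl(\tfrac{1}{N-1}\sum_{j\neq i}y^{j,1}_t\bigr)=O(1/N)$ delivers convergence to $\Sigma_1^\theta(t)$ at rate $1/\sqrt N$.

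Next I would derive a state estimate. Writing $\tilde x^i$ for $\mathcal{A}_i$'s true state in \eqref{e1} under $(\bar u^1,\dots,\bar u^N)$ and $\bar x^i$ for the auxiliary state from \eqref{e3} driven by $m_0^\theta$, the difference $\tilde x^i-\bar x^i$ satisfies a linear SDDE whose forcing is precisely the control-average error above. A Gronwall argument applied piecewise on successive subintervals of length $\delta\wedge\theta$, so that the state and control lags on the $k$-th piece are already controlled by the estimate on the $(k-1)$-th, yields $\mathbb{E}\sup_{0\le t\le T}|\tilde x^i_t-\bar x^i_t|^2=O(1/N)$. Expanding $\mathcal{J}^i(\bar u^i,\bar u^{-i})-J^i(\bar u^i)$ as an integral of quadratic and cross terms in $\tilde x^i-\bar x^i$, then applying Cauchy--Schwarz together with (H2) and the $L^2$ boundedness of $(\bar x^i,\bar u^i)$ guaranteed by Theorem \ref{l1}, gives $\epsilon_1=O(1/\sqrt N)$.

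For the deviation side it suffices to restrict to $u^i\in\mathcal{U}_i$ with $\mathcal{J}^i(u^i,\bar u^{-i})\le\mathcal{J}^i(\bar u^i,\bar u^{-i})$, since otherwise \eqref{e14} is trivial for any $\epsilon\ge 0$. Because the preceding step bounds $\mathcal{J}^i(\bar u^i,\bar u^{-i})$ uniformly in $N$, the positive-definiteness of $N_t+\widetilde N_{t+\theta}$ from (H2) yields an $N$-independent $L^2$ bound $\mathbb{E}\int_0^T|u^i_t|^2 dt\le K$. Repeating the Gronwall argument for $(\hat x^i,\check x^i)$, where $\hat x^i$ solves \eqref{e1} with $(u^i,\bar u^{-i})$ and $\check x^i$ solves \eqref{e3} with $u^i$, gives $\mathbb{E}\sup_t|\hat x^i_t-\check x^i_t|^2=O(1/N)$ uniformly over such admissible $u^i$, and Cauchy--Schwarz with the constant $K$ produces $\epsilon_2=O(1/\sqrt N)$. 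The main obstacle is the simultaneous state delay $\delta$ and control delay $\theta$, which blocks a direct Gronwall loop and forces the piecewise-in-time scheme described above; once this bookkeeping is in place, the three ingredients combine to the desired $\epsilon$-Nash property.
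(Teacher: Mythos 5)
Your proposal is correct and follows essentially the same route as the paper: the same control-average estimate (Lemma \ref{l3}), the same Gronwall state comparisons and Cauchy--Schwarz cost comparisons (Lemmas \ref{l4}--\ref{l6}), and the same final sandwich $\mathcal J^i(\bar u^i_t,\bar u^{-i}_t)=J^i(\bar u^i_t)+O(1/\sqrt N)\leq J^i(u^i_t)+O(1/\sqrt N)=\mathcal J^i(u^i_t,\bar u^{-i}_t)+O(1/\sqrt N)$. The only divergences are cosmetic: the paper handles the lagged state by a single change of variables, $\int_0^T\widetilde A_s(\check x^i_{s-\delta}-\hat x^i_{s-\delta})ds=\int_0^{T-\delta}\widetilde A_{s+\delta}(\check x^i_s-\hat x^i_s)ds$ (the difference vanishes on $[-\delta,0)$), followed by one Gronwall pass rather than your piecewise-in-time scheme --- the control-delay terms cancel in the difference dynamics since both trajectories use the same control, so no bootstrapping is needed --- and your explicit uniform $L^2$ bound on the deviating control $u^i$ is a point the paper leaves implicit.
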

The proof of Theorem \ref{t2} needs several lemmas which are presented later. Denoting
 $\check {x}^i_t$ is the centralized state trajectory with respect to $\bar u^i_t$; $\hat{x}^i_t$ is
  the decentralized one with respect to $\bar u^i_t$. The cost functionals for \textbf{(LD)} and \textbf{(LLD)} are
   denoted by $\mathcal J^i(\bar u^i_t,\bar u^{-i}_t)$ and $J^i(\bar u^i_t)$,
   respectively. 
\begin{lemma}\label{l3}
\begin{flalign}
\sup_{1\leq i\leq N}\left[\sup_{0\leq t\leq T}\mathbb{E}\Big|\frac{1}{N-1}\sum_{j=1, j\neq i}^N\widehat B_t\bar u^j_{t-\theta}-m_0^{\theta}(t)\Big|^2\right]=O\Big(\frac{1}{N}\Big),\label{e16}
\end{flalign}
where $\bar u_t^j$ is given by \eqref{e6}.
\end{lemma}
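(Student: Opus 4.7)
The plan is to use the decomposition of the Hamiltonian system provided by Lemma \ref{l2} in order to split each $\bar u^j_{t-\theta}$ into a component depending only on $W^j$ and a common component depending only on $W^0$; the first is i.i.d.\ across $j$, the second cancels against $m_0^{\theta}(t)$, and the standard variance estimate for empirical means of i.i.d.\ random variables then gives the $O(1/N)$ rate.

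Concretely, by Lemma \ref{l2} together with \eqref{e6} and the conditional-expectation identities \eqref{e10}, I would write, for each $j$,
\[
\bar u^j_{t-\theta}=U^{j,1}_{t-\theta}+U^{2}_{t-\theta},
\]
where
\[
\begin{aligned}
U^{j,1}_{t-\theta}&=-(N_{t-\theta}+\widetilde N_{t})^{-1}\bigl(B_{t-\theta}^\top y^{j,1}_{t-\theta}+\widetilde B_{t}^\top\mathbb E^{\mathcal F^{W^j}_{t-\theta}}[y^{j,1}_{t}]\bigr),\\
U^{2}_{t-\theta}&=-(N_{t-\theta}+\widetilde N_{t})^{-1}\bigl(B_{t-\theta}^\top y^{2}_{t-\theta}+\widetilde B_{t}^\top\mathbb E^{\mathcal F^{W^0}_{t-\theta}}[y^{2}_{t}]\bigr).
\end{aligned}
\]
By the structure of \eqref{e8}, the processes $(U^{j,1})_{j\ge1}$ are i.i.d.\ (same coefficients, homogeneous initial data), and each is $\mathcal F^{W^j}$-adapted, hence independent of $W^0$ and of $W^k$ for $k\neq j$. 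The preceding proposition identifies
\[
m_0^{\theta}(t)=\widehat B_t\,\mathbb E\bigl[U^{1,1}_{t-\theta}\bigr]+\widehat B_t\,U^{2}_{t-\theta}.
\]

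Substituting and noting that the $N-1$ copies of $U^{2}_{t-\theta}$ in the empirical average cancel the second piece of $m_0^{\theta}(t)$, I obtain
\[
\frac{1}{N-1}\sum_{j=1,j\neq i}^N\widehat B_t\bar u^j_{t-\theta}-m_0^{\theta}(t)
=\widehat B_t\Bigl[\frac{1}{N-1}\sum_{j=1,j\neq i}^N U^{j,1}_{t-\theta}-\mathbb E\bigl[U^{1,1}_{t-\theta}\bigr]\Bigr].
\]
Since $\widehat B_t$ is uniformly bounded by (H1) and the $\{U^{j,1}_{t-\theta}\}_{j\neq i}$ are i.i.d.\ with finite second moment (inherited from the $L^2$-wellposedness of \eqref{e8} established in Theorem \ref{l1}), a direct variance computation gives
\[
\mathbb E\Bigl|\frac{1}{N-1}\sum_{j=1,j\neq i}^N U^{j,1}_{t-\theta}-\mathbb E\bigl[U^{1,1}_{t-\theta}\bigr]\Bigr|^{2}=\frac{1}{N-1}\operatorname{Var}\bigl(U^{1,1}_{t-\theta}\bigr)=O(1/N).
\]

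The remaining point, and the only real technicality, is to upgrade this pointwise-in-$t$ estimate to the uniform bound $\sup_{0\le t\le T}$ claimed in \eqref{e16}. For this I would verify that $t\mapsto\mathbb E|U^{1,1}_{t-\theta}|^2$ is bounded on $[0,T]$; this follows from (H1)--(H2) together with the standard a priori estimate for the linear AFBSDDE \eqref{e8}, which yields
\[
\sup_{-\theta\le t\le T+(\delta\vee\theta)}\mathbb E\bigl|y^{1,1}_{t}\bigr|^{2}<\infty,
\]
and hence a uniform-in-$t$ bound on $\operatorname{Var}(U^{1,1}_{t-\theta})$. Since the constants arising do not depend on $i$, the supremum over $1\le i\le N$ is harmless, and combining the two suprema yields \eqref{e16}. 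The only delicate step is this uniform moment bound on the adjoint process $y^{1,1}$ on the enlarged interval $[-\theta,T+(\delta\vee\theta)]$, but it is a direct consequence of the wellposedness already invoked in the paper.
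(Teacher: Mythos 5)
Your proposal is correct and follows essentially the same route as the paper: decompose $\bar u^j_{t-\theta}$ via Lemma \ref{l2} into an idiosyncratic part driven by $W^j$ and a common part driven by $W^0$, cancel the common part against $m_0^{\theta}(t)$, and apply the variance estimate for independent copies to get the $O(1/N)$ rate. Your explicit attention to the uniform-in-$t$ moment bound on $y^{1,1}$ is a detail the paper leaves implicit, but it does not change the argument.
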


{\it Proof.} By \eqref{e6}, \eqref{e10} and Lemma \ref{l2}, we get
\begin{equation}\label{e17}
\begin{aligned}
\frac{1}{N-1}\sum_{j=1, j\neq i}^N\widehat B_t\bar u^j_{t-\theta}&=\frac{1}{N-1}\sum_{j=1, j\neq i}^N\widehat B_t\Big\{-(N_{t-\theta}+\widetilde N_{t})^{-1}\big(B_{t-\theta}^\top \bar{y}^j_{t-\theta}+\widetilde B_{t}^\top\mathbb E^{\mathcal F^j_{t-\theta}}[\bar{y}^j_{t}]\big)\Big\}\\
&=-\widehat B_t(N_{t-\theta}+\widetilde N_{t})^{-1}\Bigg\{B_{t-\theta}^\top\Big(\frac{1}{N-1}\sum_{j=1, j\neq i}^Ny^{j,1}_{t-\theta}+y^2_{t-\theta}\Big)\\
&\qquad\qquad+\widetilde B_{t}^\top\Big(\frac{1}{N-1}\sum_{j=1, j\neq i}^N\mathbb E^{\mathcal F^{W^j}_{t-\theta}}[y^{j,1}_{t}]+\mathbb E^{\mathcal F^{W^0}_{t-\theta}}[y^{2}_{t}]\Big)\Bigg\}.
\end{aligned}
\end{equation}
Combining \eqref{e17} and \eqref{e11}, we obtain
\begin{equation}\label{e18}
\begin{aligned}
&\frac{1}{N-1}\sum_{j=1, j\neq i}^N\widehat B_t\bar u^j_{t-\theta}-m_0^{\theta}(t)\\
=&-\widehat B_t(N_{t-\theta}+\widetilde N_{t})^{-1}\Bigg\{B_{t-\theta}^\top\Big(\frac{1}{N-1}\sum_{j=1, j\neq i}^Ny^{j,1}_{t-\theta}-\mathbb E y^{1}_{t-\theta}\Big)\\
&\qquad\qquad\qquad\qquad\qquad+\widetilde B_{t}^\top\Big(\frac{1}{N-1}\sum_{j=1, j\neq i}^N\mathbb E^{\mathcal F^{W^j}_{t-\theta}}[y^{j,1}_{t}]-\mathbb E y_t^{1}\Big)\Bigg\}.
\end{aligned}
\end{equation}
Then it follows from \textbf{(H1)} that
\[
\begin{aligned}
&\mathbb E\Big|\frac{1}{N-1}\sum_{j=1, j\neq i}^N\widehat B_t\bar u^j_{t-\theta}-m_0^{\theta}(t)\Big|^2\\
\leq &C_1\Bigg\{\mathbb E\Big|\frac{1}{N-1}\sum_{j=1, j\neq i}^Ny^{j,1}_{t-\theta}-\mathbb E y^{1}_{t-\theta}\Big|^2+\mathbb E\Big|\frac{1}{N-1}\sum_{j=1, j\neq i}^N\mathbb E^{\mathcal F^{W^j}_{t-\theta}}[y^{j,1}_{t}]-\mathbb E y_t^{1}\Big|^2\Bigg\},
\end{aligned}
\]
where $C_1$ is a positive constant. Recall \eqref{e8} that $y^{j,1}_t\in  L^2_{\mathcal F^{W^j}_t}(-\theta,T+(\delta\vee\theta);\mathbb R^n)$. Thus $y^{j,1}_t$ is independent of $y^{k,1}_t$, for $j\neq k$, and we have $$\mathbb E \big(y^{j,1}_{t-\theta}-\mathbb E y^{1}_{t-\theta}\big)\big(y^{k,1}_{t-\theta}-\mathbb E y^{1}_{t-\theta}\big)=0$$
and $$\mathbb E \big(\mathbb E^{\mathcal F^{W^j}_{t-\theta}}[y^{j,1}_{t}]-\mathbb E y_t^{1}\big)\big(\mathbb E^{\mathcal F^{W^k}_{t-\theta}}[y^{k,1}_{t}]-\mathbb E y_t^{1}\big)=0.$$
Hence the result.    \hfill  $\Box$

\begin{lemma}\label{l4}
\begin{flalign}
&\sup_{1\leq i\leq N}\left[\sup_{0\leq t\leq T}\mathbb{E}\big|\check{x}^i_t-\hat{x}^i_t\big|^2\right]=O\Big(\frac{1}{N}\Big),\label{e19}\\
&\sup_{1\leq i\leq N}\left[\sup_{0\leq t\leq T}\mathbb{E}\big|\check{x}^i_{t-\delta}-\hat{x}^i_{t-\delta}\big|^2\right]=O\Big(\frac{1}{N}\Big).\label{e20}
\end{flalign}
\end{lemma}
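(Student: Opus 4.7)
The plan is to reduce the statement to a Gronwall-type estimate for a linear SDDE whose forcing term is the input-average fluctuation already controlled in Lemma \ref{l3}.

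First I would subtract equation \eqref{e3} (with $u^i = \bar u^i$) from \eqref{e1} (with all agents playing $\bar u^j$). Because both dynamics share the identical initial segment $a$ on $\{0\}$ and $\xi^i_t$ on $[-\delta,0)$, the same diffusion coefficients $\sigma_t,\sigma^0_t$, and the same drift contributions $A_tx + \widetilde A_t x_{t-\delta} + B_t\bar u^i_t + \widetilde B_t\bar u^i_{t-\theta}$, the Brownian integrals and the $\bar u^i$-dependent drifts cancel. Setting $e^i_t := \check{x}^i_t - \hat{x}^i_t$ and $\varepsilon^i_t := \frac{1}{N-1}\sum_{j \neq i}\widehat B_t \bar u^j_{t-\theta} - m_0^{\theta}(t)$, the difference therefore satisfies the deterministic-in-form linear SDDE
\begin{equation*}
e^i_t = \int_0^t \bigl[ A_s e^i_s + \widetilde A_s e^i_{s-\delta} + \varepsilon^i_s\bigr]\,ds,\qquad e^i_t = 0 \text{ for } t\in[-\delta,0].
\end{equation*}

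Next I would square, take expectation, and apply Cauchy--Schwarz together with the uniform boundedness of $A$ and $\widetilde A$ from \textbf{(H1)} to obtain, for some constant $C$ independent of $i,N$,
\begin{equation*}
\mathbb E|e^i_t|^2 \le C\int_0^t \bigl(\mathbb E|e^i_s|^2 + \mathbb E|e^i_{s-\delta}|^2\bigr)\,ds + C\int_0^T \mathbb E|\varepsilon^i_s|^2\,ds.
\end{equation*}
By Lemma \ref{l3}, the last term is $O(1/N)$ uniformly in $i$. I would then run a standard step-by-step Gronwall argument on the intervals $[0,\delta],[\delta,2\delta],\ldots$: on the first interval the delayed term vanishes, so ordinary Gronwall yields $\sup_{t\le \delta}\mathbb E|e^i_t|^2 = O(1/N)$; feeding this into the second interval makes $\mathbb E|e^i_{s-\delta}|^2$ itself $O(1/N)$, and induction over finitely many intervals covering $[0,T]$ propagates the same rate to all of $[0,T]$, giving \eqref{e19}.

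Finally \eqref{e20} is immediate: for $t\in[0,\delta)$ both $\check x^i_{t-\delta}$ and $\hat x^i_{t-\delta}$ equal $\xi^i_{t-\delta}$, so the difference is identically zero; for $t\in[\delta,T]$ the estimate is just a time-shift of \eqref{e19}, and the supremum over $i$ is preserved because every constant in the chain above is $i$-independent. I do not foresee a genuine obstacle here; the only point requiring mild care is handling the delayed state in Gronwall, which is why I prefer the interval-iteration argument over trying to invoke a single-shot Gronwall inequality directly.
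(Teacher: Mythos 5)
Your proposal is correct and follows essentially the same route as the paper: subtract \eqref{e3} from \eqref{e1} so the Brownian terms and the $\bar u^i$-drifts cancel, bound the forcing term by Lemma \ref{l3}, and close with Gronwall, with \eqref{e20} obtained by a time shift. The only (immaterial) difference is that the paper handles the delayed term by a change of variables $s\mapsto s+\delta$ (using that the error vanishes on $[-\delta,0]$) so that a single Gronwall application suffices, whereas you iterate Gronwall over subintervals of length $\delta$; both are valid.
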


{\it Proof.} For $\forall\ 1\leq i\leq N$, by \eqref{e1} and \eqref{e3}, we have
\begin{equation}\nonumber
\left\{
\begin{aligned}
d(\check{x}_t^i-\hat{x}_t^i)&=\Big[A_t(\check{x}_t^i-\hat{x}_t^i)+\widetilde{A}_t(\check{x}^i_{t-\delta}-\hat{x}^i_{t-\delta})+\frac{1}{N-1}
\sum_{j=1,j\neq i}^N\widehat B_t\bar{u}^j_{t-\theta}-m_0^{\theta}(t)\Big]dt,~t\in[0,T],\\
\check{x}^i_0-\hat{x}_0^i&=0,~~~
\check{x}^i_t-\hat{x}_t^i=0,~~~t\in[-\delta,0).
\end{aligned}
\right.
\end{equation}
Taking integral from 0 to $T$, we get
\begin{equation}\nonumber
\begin{aligned}
\check{x}_t^i-\hat{x}_t^i&=\int_0^T\Big[A_s(\check{x}_s^i-\hat{x}_s^i)+\widetilde{A}_s(\check{x}^i_{s-\delta}-\hat{x}^i_{s-\delta})+\frac{1}{N-1}
\sum_{j=1,j\neq i}^N\widehat B_s\bar{u}^j_{s-\theta}-m_0^\theta(t)\Big]ds.
\end{aligned}
\end{equation}
Note that $$\int_0^T\widetilde{A}_s(\check{x}^i_{s-\delta}-\hat{x}^i_{s-\delta})ds=\int_{-\delta}^{T-\delta}\widetilde{A}_{s+\delta}(\check{x}^i_{s}-\hat{x}^i_{s})ds
=\int_{0}^{T-\delta}\widetilde{A}_{s+\delta}(\check{x}^i_{s}-\hat{x}^i_{s})ds.$$
By Lemma \ref{l3}, \textbf{(H1)} and Gronwall's inequality, \eqref{e19} is obtained.

In addition, $$\sup_{0\leq t\leq T}\mathbb{E}\big|\check{x}^i_{t-\delta}-\hat{x}^i_{t-\delta}\big|^2=\sup_{0\leq \tau\leq T-\delta}\mathbb{E}\big|\check{x}^i_{\tau}-\hat{x}^i_{\tau}\big|^2\leq \sup_{0\leq \tau\leq T}\mathbb{E}\big|\check{x}^i_{\tau}-\hat{x}^i_{\tau}\big|^2.$$
Then we get \eqref{e20}.   \hfill  $\Box$

\begin{lemma}\label{l5}
\begin{flalign}
&\sup_{1\leq i\leq N}\left[\sup_{0\leq t\leq T}\mathbb{E}\Big||\check{x}^i_t|^2-|\hat{x}^i_t|^2\Big|\right]=O\Big(\frac{1}{\sqrt{N}}\Big),\label{e21}\\
&\sup_{1\leq i\leq N}\left[\sup_{0\leq t\leq T}\mathbb{E}\Big||\check{x}^i_{t-\delta}|^2-|\hat{x}^i_{t-\delta}|^2\Big|\right]=O\Big(\frac{1}{\sqrt{N}}\Big),\label{e22}\\
&\Big|\mathcal J^i(\bar u^i_t,\bar u^{-i}_t)-J^i(\bar u^i_t)\Big|=O\Big(\frac{1}{\sqrt{N}}\Big),\ \ 1\leq i\leq N. \label{e23}
\end{flalign}
\end{lemma}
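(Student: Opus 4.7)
The plan is to reduce all three bounds to the $O(1/\sqrt{N})$ state-difference estimates already provided by Lemma \ref{l4}, via the elementary identity $|a|^{2}-|b|^{2}=\langle a-b,\,a+b\rangle$ and the Cauchy--Schwarz inequality. For \eqref{e21} I would start from
\[
\mathbb{E}\big||\check{x}^{i}_{t}|^{2}-|\hat{x}^{i}_{t}|^{2}\big|
\;\le\;\bigl(\mathbb{E}|\check{x}^{i}_{t}-\hat{x}^{i}_{t}|^{2}\bigr)^{1/2}\bigl(\mathbb{E}|\check{x}^{i}_{t}+\hat{x}^{i}_{t}|^{2}\bigr)^{1/2},
\]
then invoke \eqref{e19} to dispose of the first factor, giving $O(1/\sqrt{N})$. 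For the second factor it is enough to exhibit a constant $C$, independent of $N$, $i$, and $t\in[0,T]$, such that $\mathbb{E}|\hat{x}^{i}_{t}|^{2}+\mathbb{E}|\check{x}^{i}_{t}|^{2}\le C$. This should be a routine consequence of Gronwall's inequality applied to \eqref{e1} and \eqref{e3}, once one observes that, by homogeneity of coefficients and initial data, the decentralized controls $\bar u^{j}$ given by \eqref{e6} admit a uniform $L^{2}_{\mathcal F^{j}_{t}}$-bound from Theorem \ref{l1}; the averaging term $\frac{1}{N-1}\sum_{j\neq i}\widehat B_{t}\bar u^{j}_{t-\theta}$ that drives $\check x^{i}$ is then controlled by Jensen's inequality in terms of these individual $L^{2}$-bounds, so the Gronwall constant does not degrade with $N$. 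Estimate \eqref{e22} follows by exactly the same recipe, combined with the simple time-shift observation $\sup_{0\le t\le T}\mathbb{E}|\check x^{i}_{t-\delta}+\hat x^{i}_{t-\delta}|^{2}\le\sup_{-\delta\le s\le T}\mathbb{E}|\check x^{i}_{s}+\hat x^{i}_{s}|^{2}$, together with \eqref{e20}.

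For \eqref{e23} the key algebraic observation is that $\mathcal J^{i}(\bar u^{i}_{t},\bar u^{-i}_{t})$ and $J^{i}(\bar u^{i}_{t})$ are built from the same control $\bar u^{i}$, so the quadratic-in-control terms $\langle N_{t}\bar u^{i}_{t},\bar u^{i}_{t}\rangle$ and $\langle \widetilde N_{t}\bar u^{i}_{t-\theta},\bar u^{i}_{t-\theta}\rangle$ cancel when we subtract. What remains is
\[
\mathcal J^{i}-J^{i}=\tfrac{1}{2}\mathbb{E}\!\int_{0}^{T}\!\!\bigl\langle R_{t},|\check x^{i}_{t}|^{2}-|\hat x^{i}_{t}|^{2}\bigr\rangle dt+\tfrac{1}{2}\mathbb{E}\!\int_{0}^{T}\!\!\bigl\langle \widetilde R_{t},|\check x^{i}_{t-\delta}|^{2}-|\hat x^{i}_{t-\delta}|^{2}\bigr\rangle dt+\tfrac{1}{2}\mathbb{E}\bigl\langle M,|\check x^{i}_{T}|^{2}-|\hat x^{i}_{T}|^{2}\bigr\rangle,
\]
with a slight abuse of notation for the quadratic forms. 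Applying \eqref{e21} under the integral (which is legitimate after using the $L^\infty$ bounds on $R_t$ from \textbf{(H2)} and Fubini), together with \eqref{e22} for the delayed term and the $t=T$ specialization of the argument behind \eqref{e21} for the terminal contribution, yields $|\mathcal J^{i}-J^{i}|=O(1/\sqrt{N})$, completing the proof.

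The only non-bookkeeping step is the uniform $L^{2}$ moment estimate on $\check x^{i}$ that I flagged above; once that is in hand the rest is a mechanical application of Cauchy--Schwarz and Lemma \ref{l4}. I would not expect a substantive obstacle, but care is needed to make sure the constants extracted from Gronwall do not secretly depend on $N$ through the coupling term, which is where Jensen's inequality together with the $N$-independence of the limiting controls $\bar u^{j}$ is doing the real work.
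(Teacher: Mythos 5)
Your proof is correct and follows essentially the same route as the paper: both reduce all three claims to Lemma \ref{l4} via a difference-of-squares identity plus Cauchy--Schwarz, and then bound the cost difference term by term using \textbf{(H2)}. The only cosmetic difference is that the paper expands $|\check{x}^i_t|^2-|\hat{x}^i_t|^2=|\check{x}^i_t-\hat{x}^i_t|^2+2\langle \hat{x}^i_t,\check{x}^i_t-\hat{x}^i_t\rangle$, which requires only the (immediate) uniform second moment of the decentralized state $\hat{x}^i$, so the Gronwall estimate on $\check{x}^i$ that you flag as the one substantive step can be bypassed, or obtained for free from \eqref{e19} together with the bound on $\hat{x}^i$.
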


{\it Proof.} For $\forall\ 1\leq i\leq N,$ it is easy to see $\sup\limits_{0\leq t\leq T}\mathbb{E}\big|\hat{x}^i_t\big|^2<+\infty,\sup\limits_{0\leq t\leq T}\mathbb{E}\big|\hat{x}^i_{t-\delta}\big|^2<+\infty$. Applying Cauchy-Schwarz inequality and \eqref{e19}, we have
\begin{equation}\nonumber\begin{aligned}
&\sup_{0\leq t\leq T}\mathbb{E}\left||\check{x}^i_t|^2-|\hat{x}^i_t|^2\right|\\
\leq&\sup_{0\leq t\leq T}\mathbb{E}\big|\check{x}^i_t-\hat{x}^i_t\big|^2+2\Big(\sup_{0\leq t\leq T}\mathbb{E}|\hat{x}^i_t|^2\Big)^{\frac{1}{2}}\Big(\sup_{0\leq t\leq T}\mathbb{E}\big|\check{x}^i_t-\hat{x}^i_t\big|^2\Big)^{\frac{1}{2}}\\
=&O\Big(\frac{1}{\sqrt{N}}\Big).
\end{aligned}
\end{equation}
Similarly, \eqref{e22} is obtained. Then noting \textbf{(H2)}, we have
\begin{equation}\nonumber\begin{aligned}
&\big|\mathcal J^i(\bar u^i_t,\bar u^{-i}_t)-J^i(\bar u^i_t)\big|\\
\leq& C_2\mathbb{E}\int_0^T \big(\big||\check{x}^i_t|^2-|\hat{x}^i_t|^2\big|+\big||\check{x}^i_{t-\delta}|^2-|\hat{x}^i_{t-\delta}|^2\big|\big)dt+ C_2\mathbb{E}\big||\check{x}^i_T|^2-|\hat{x}^i_T|^2\big|\\
=& O\Big(\frac{1}{\sqrt{N}}\Big),
\end{aligned}
\end{equation}
which implies \eqref{e23}. Here, $C_2$ is a positive constant. \hfill  $\Box$

Until now, we have addressed some estimates of states and costs corresponding to control $\bar{u}^i_t$, $1\le i\le N$. Next we will focus on the $\epsilon$-Nash equilibrium for (\textbf{LD}). For any fixed $i$, $1\le i\le N$, consider a alternative control $u^i_t \in \mathcal{U}_i$ for $\mathcal{A}_i$ and introduce the dynamics
\begin{equation}\label{e24}
\left\{
\begin{aligned}
dl_t^i&=\Big[A_tl^i_t+\widetilde{A}_tl^i_{t-\delta}+B_tu^i_t+\widetilde{B}_tu^i_{t-\theta}+\frac{1}{N-1}
\sum_{\kappa=1,\kappa\neq i}^N\widehat B_t\bar{u}^\kappa_{t-\theta}\Big]dt+\sigma_tdW^i_t+\sigma^0_tdW^0_t,~t\in[0,T],\\
l^i_0&=a,~~~
l^i_t=\xi^i_t,~~~t\in[-\delta,0),~~~u^i_t=\eta^i_t,~~~t\in[-\theta,0),
\end{aligned}
\right.
\end{equation}
whereas other players keep the control $\bar{u}_t^j,1\leq j\leq N,j\neq i$ i.e.,
\begin{equation}\nonumber
\left\{
\begin{aligned}
dl_t^j&=\Big[A_tl^j_t+\widetilde{A}_tl^j_{t-\delta}+B_t\bar{u}^j_t+\widetilde{B}_t\bar{u}^j_{t-\theta}+\frac{1}{N-1}
\widehat B_t\Big(\sum_{\kappa=1,\kappa\neq i,j}^N\bar{u}^\kappa_{t-\theta}+u^i_{t-\theta}\Big)\Big]dt+\sigma_tdW^j_t+\sigma^0_tdW^0_t,\\
l^j_0&=a,~~~
l^j_t=\xi^j_t,~~~t\in[-\delta,0),~~~u^i_t=\eta^i_t,~~~t\in[-\theta,0).
\end{aligned}
\right.
\end{equation}
The dynamics of $\mathcal{A}_i$ with respect to $u^i_t$ for (\textbf{LLD}) is
\begin{equation}\label{e25}
\left\{
\begin{aligned}
dp_t^i&=\Big[A_tp^i_t+\widetilde{A}_tp^i_{t-\delta}+B_tu^i_t+\widetilde{B}_tu^i_{t-\theta}+m_0^{\theta}(t)\Big]dt+\sigma_tdW^i_t+\sigma^0_tdW^0_t,~t\in[0,T],\\
p^i_0&=a,~~~
p^i_t=\xi^i_t,~~~t\in[-\delta,0),~~~u^i_t=\eta^i_t,~~~t\in[-\theta,0).
\end{aligned}
\right.
\end{equation}
We have the following lemma.
\begin{lemma}\label{l6}
\begin{flalign}
&\sup_{1\leq i\leq N}\left[\sup_{0\leq t\leq T}\mathbb{E}\big|l^i_t-p^i_t\big|^2\right]=O\Big(\frac{1}{N}\Big),\label{e26}\\
&\sup_{1\leq i\leq N}\left[\sup_{0\leq t\leq T}\mathbb{E}\big|l^i_{t-\delta}-p^i_{t-\delta}\big|^2\right]=O\Big(\frac{1}{N}\Big),\label{e27}\\
&\sup_{1\leq i\leq N}\left[\sup_{0\leq t\leq T}\mathbb{E}\Big||l^i_t|^2-|p^i_t|^2\Big|\right]=O\Big(\frac{1}{\sqrt{N}}\Big),\label{e28}\\
&\sup_{1\leq i\leq N}\left[\sup_{0\leq t\leq T}\mathbb{E}\Big||l^i_{t-\delta}|^2-|p^i_{t-\delta}|^2\Big|\right]=O\Big(\frac{1}{\sqrt{N}}\Big),\label{e29}\\
&\Big|\mathcal J^i(u^i_t,\bar u^{-i}_t)-J^i(u^i_t)\Big|=O\Big(\frac{1}{\sqrt{N}}\Big),\ \ 1\leq i\leq N. \label{e30}
\end{flalign}
\end{lemma}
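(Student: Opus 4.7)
The plan is to mirror the two-step strategy already employed for Lemmas \ref{l4} and \ref{l5}: first bound the mean-square difference between the centralized trajectory $l^i_t$ from \eqref{e24} and its decentralized counterpart $p^i_t$ from \eqref{e25}, then lift these bounds to squared norms and to the cost functional.

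For \eqref{e26}, I would subtract \eqref{e25} from \eqref{e24}. Because both dynamics are driven by the same Brownian increments $\sigma_t dW^i_t + \sigma^0_t dW^0_t$ and share identical contributions from $u^i_t$, $u^i_{t-\theta}$ and from the $[-\delta,0]$-history and initial datum, the noise terms cancel and $l^i-p^i$ satisfies the deterministic-driven integral equation
\begin{equation*}
l^i_t - p^i_t = \int_0^t \Big[A_s(l^i_s - p^i_s) + \widetilde A_s(l^i_{s-\delta} - p^i_{s-\delta}) + \Big(\frac{1}{N-1}\sum_{\kappa\neq i}\widehat B_s \bar u^\kappa_{s-\theta} - m_0^{\theta}(s)\Big)\Big]ds.
\end{equation*}
Using the shift identity $\int_0^t \widetilde A_s(l^i_{s-\delta} - p^i_{s-\delta})ds = \int_0^{(t-\delta)\vee 0}\widetilde A_{s+\delta}(l^i_s - p^i_s)ds$, together with the $O(1/N)$ bound on the inhomogeneous term supplied by Lemma \ref{l3}, boundedness of $A,\widetilde A$ from \textbf{(H1)}, Cauchy--Schwarz, and Gronwall's inequality, I obtain $\sup_{0\le t\le T}\mathbb E|l^i_t - p^i_t|^2 = O(1/N)$ uniformly in $i$, which is \eqref{e26}. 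Estimate \eqref{e27} then follows from the same time-shift observation used in Lemma \ref{l4}, namely $\sup_{0\le t\le T}\mathbb E|l^i_{t-\delta}-p^i_{t-\delta}|^2 \le \sup_{0\le\tau\le T}\mathbb E|l^i_\tau - p^i_\tau|^2$.

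For \eqref{e28}--\eqref{e29}, I factor $|l^i_t|^2 - |p^i_t|^2 = \langle l^i_t - p^i_t,\, l^i_t + p^i_t\rangle$ and apply Cauchy--Schwarz exactly as in the proof of Lemma \ref{l5}, obtaining
\begin{equation*}
\mathbb E\bigl||l^i_t|^2 - |p^i_t|^2\bigr| \le \mathbb E|l^i_t - p^i_t|^2 + 2\bigl(\mathbb E|p^i_t|^2\bigr)^{1/2}\bigl(\mathbb E|l^i_t - p^i_t|^2\bigr)^{1/2}.
\end{equation*}
Since admissibility $u^i\in\mathcal U_i$ combined with \textbf{(H1)} yields $\sup_{0\le t\le T}\mathbb E|p^i_t|^2<\infty$ via a standard Gronwall argument on \eqref{e25}, the $O(1/\sqrt N)$ rate in \eqref{e28} follows, and a parallel time-shift argument gives \eqref{e29}. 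Finally, \eqref{e30} is obtained by writing $\mathcal J^i(u^i_t,\bar u^{-i}_t) - J^i(u^i_t)$ as a time integral of the differences $|l^i_t|^2 - |p^i_t|^2$ and $|l^i_{t-\delta}|^2 - |p^i_{t-\delta}|^2$ weighted by $R_t,\widetilde R_t$, plus a terminal term against $M$, all uniformly bounded by \textbf{(H2)}; this reproduces the reasoning used for \eqref{e23}.

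The main obstacle is making the bound $\sup_{0\le t\le T}\mathbb E|p^i_t|^2 \le C$ uniform in the alternative control $u^i$, which is what guarantees that the implicit constants in \eqref{e28}--\eqref{e30} do not blow up and that these estimates are actually usable in the ensuing $\epsilon$-Nash analysis. The standard way to close this gap, which I would incorporate inside the proof of Theorem \ref{t2}, is to restrict attention to alternative controls satisfying $\mathcal J^i(u^i_t,\bar u^{-i}_t) \le \mathcal J^i(\bar u^i_t,\bar u^{-i}_t) + 1$ (since otherwise \eqref{e14} holds trivially); under \textbf{(H2)}, the uniform positivity $N_t+\widetilde N_{t+\theta}\in \hat S^k_+$ translates this cost bound into a uniform $L^2$-bound on $u^i$, which in turn controls $\sup_t \mathbb E|p^i_t|^2$ and preserves the $O(1/\sqrt N)$ rate in \eqref{e28}--\eqref{e30}.
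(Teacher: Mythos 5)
Your argument is essentially the paper's own proof, which disposes of Lemma \ref{l6} in three lines by invoking ``the same analysis as Lemma \ref{l4}'' (cancellation of the noise and of the common $u^i$-terms upon subtracting \eqref{e25} from \eqref{e24}, the time-shift identity for the delayed term, Lemma \ref{l3} plus Gronwall) and then Cauchy--Schwarz as in Lemma \ref{l5}. Your closing observation---that the bound $\sup_{0\leq t\leq T}\mathbb E|p^i_t|^2\leq C$ must be made uniform over the alternative controls $u^i$, which one secures by restricting to controls satisfying $\mathcal J^i(u^i_t,\bar u^{-i}_t)\leq \mathcal J^i(\bar u^i_t,\bar u^{-i}_t)+1$ and using the uniform positivity of $N_t+\widetilde N_{t+\theta}$---addresses a point the paper leaves entirely implicit and is the standard way to make the constants in \eqref{e28}--\eqref{e30}, and hence the $\epsilon$-Nash conclusion, rigorous.
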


{\it Proof.} Using the same analysis to the proof of Lemma \ref{l4}, by \eqref{e24}-\eqref{e25} and noting Lemma \ref{l3}, we get \eqref{e26} and \eqref{e27}. By virtue of \eqref{e26} and \eqref{e27}, \eqref{e28} and \eqref{e29} follows by applying Cauchy-Schwarz inequality. Same to Lemma \ref{l5}, \eqref{e30} is obtained.      \hfill  $\Box$

\textbf{Proof of Theorem \ref{t2}:} Now, we consider the $\epsilon$-Nash equilibrium for $\mathcal{A}_i,1\leq i\leq N$. It follows from \eqref{e23} and \eqref{e30} that
\begin{equation}\nonumber\begin{aligned}
\mathcal J^i(\bar u^i_t,\bar u^{-i}_t)&=J^i(\bar u^i_t)+O\Big(\frac{1}{\sqrt{N}}\Big)\\
&\leq J^i(u^i_t)+O\Big(\frac{1}{\sqrt{N}}\Big)\\
&=\mathcal J^i(u^i_t,\bar u^{-i}_t)+O\Big(\frac{1}{\sqrt{N}}\Big).
\end{aligned}
\end{equation}
Thus, Theorem \ref{t2} follows by taking $\epsilon=O\Big(\frac{1}{\sqrt{N}}\Big)$.

\section{Special cases}
In this section, we will study some special cases to show the essence of MFG problem with delay.\\

\textbf{ Case I:}
 In this case, we will give the ``closed-loop" form of the $\epsilon$-Nash equilibrium.
For simplicity, let $\widetilde A_t=\widetilde B_t=0$ in system \eqref{e1}, then we study the following system,

\begin{equation}\label{system x}
\left\{
\begin{aligned}
dx_t^i&=\Big[A_tx^i_t+B_tu^i_t+\frac{1}{N-1}
\sum_{j=1,j\neq i}^N\widehat B_tu^j_{t-\theta}\Big]dt+\sigma_tdW^i_t+\sigma^0_tdW^0_t,~t\in[0,T],\\
x_0&=a,\\
\end{aligned}
\right.
\end{equation}
and the cost functional is still  \eqref{e2}.

Now, we consider the following FBSDE
\begin{equation}\label{e77}
\left\{
\begin{aligned}
d\bar{x}_t^i&=\Big[A_t\bar{x}^i_t-B_t(N_t+\widetilde N_{t+\theta})^{-1}B_t^\top \bar{y}^i_t+m_0^{\theta}(t)\Big]dt+\sigma_tdW^i_t+\sigma^0_tdW^0_t,\\
d\bar{y}_t^i&=-\Big[A^\top_t\bar{y}^i_t+(R_t+\widetilde{R}_{t+\delta})\bar{x}^i_t\Big]dt
+\bar{z}^i_tdW^i_t+\bar{z}^0_tdW^0_t,~t\in[0,T],\\
\bar{x}^i_0&=a,\\
\bar{y}^i_T&=M\bar{x}^i_T,\\
\end{aligned}
\right.
\end{equation}

In system \eqref{e77}, we could deduce the $m_0^{\theta}(t)$ as follows,
\[
\begin{aligned}
m_0^{\theta}(t)&=\lim_{N\rightarrow\infty}\widehat B_t\frac{1}{N-1}\sum_{j=1,j\neq i}^{N}\bar{u}^j_{t-\theta}\\
&=-\widehat B_t(N_{t-\theta}+\widetilde N_{t})^{-1}B_{t-\theta}^{\top}\lim_{N\rightarrow\infty}\frac{1}{N-1}\sum_{j=1,j\neq i}^{N}\tilde y_{t-\theta}^i\\
&=-\widehat B_t(N_{t-\theta}+\widetilde N_{t})^{-1}B_{t-\theta}^{\top}\mathbb E^{\mathcal F^{W^0}_{t-\theta}}[\tilde y_{t-\theta}]
\end{aligned}
\]
where $\tilde y_{t}$ satisfies the following FBSDDE,
\begin{equation}\label{e777}
\left\{
\begin{aligned}
d\tilde{x}_t&=\Big[A_t\tilde{x}_t-B_t(N_t+\widetilde N_{t+\theta})^{-1}B_t^\top \tilde{y}_t-\widehat B_t(N_{t-\theta}+\widetilde N_{t})^{-1}B_{t-\theta}^{\top}\mathbb E^{\mathcal F^{W^0}_{t-\theta}}[\tilde y_{t-\theta}]\Big]dt+\sigma_tdW_t\\
&\qquad\qquad+\sigma^0_tdW^0_t,\\
d\tilde{y}_t&=-\Big[A^\top_t\tilde{y}_t+(R_t+\widetilde{R}_{t+\delta})\tilde{x}_t\Big]dt
+\tilde{z}_tdW_t+\tilde{z}^0_tdW^0_t,~t\in[0,T],\\
\tilde{x}_0&=a,\\
\tilde{y}_T&=M\tilde{x}_T,\\
\end{aligned}
\right.
\end{equation}
where $W_t$ and $W^i_t$ are independent and identically distributed. According to the Appendix in \cite{Chen-Wu-Yu}, \eqref{e777} has a unique solution $(\tilde x_t, \tilde y_t, \tilde z_t, \tilde z_t^0)$.

Then, FBSDDE \eqref{e77} could be rewritten as follows
\begin{equation}\label{e7777}
\left\{
\begin{aligned}
d\bar{x}_t^i&=\Big[A_t\bar{x}^i_t-B_t(N_t+\widetilde N_{t+\theta})^{-1}B_t^\top \bar{y}^i_t-\widehat B_t(N_{t-\theta}+\widetilde N_{t})^{-1}B_{t-\theta}^{\top}\mathbb E^{\mathcal F^{W^0}_{t-\theta}}[\tilde y_{t-\theta}]\Big]dt]dt\\
&\qquad\qquad+\sigma_tdW^i_t+\sigma^0_tdW^0_t,\\
d\bar{y}_t^i&=-\Big[A^\top_t\bar{y}^i_t+(R_t+\widetilde{R}_{t+\delta})\bar{x}^i_t\Big]dt
+\bar{z}^i_tdW^i_t+\bar{z}^0_tdW^0_t,~t\in[0,T],\\
\bar{x}^i_0&=a,\\
\bar{y}^i_T&=M\bar{x}^i_T.\\
\end{aligned}
\right.
\end{equation}
FBSDE \eqref{e7777} could be decoupled by the following  Riccati equation and ordinary differential equation
\[
\left\{
\begin{aligned}
&~\dot{P}_t+P_tA_t+A^\top_tP_t+R_t+\widetilde{R}_{t+\delta}-P_tB_t(N_t+\widetilde{N}_{t+\theta})^{-1}B^\top_tP_t=0,\\
&~P_T=M,
\end{aligned}
\right.
\]
and
\[
\left\{
\begin{aligned}
&~\dot{\phi}_t+[A_t-B_t(N_t+\widetilde{N}_{t+\theta})^{-1}B^\top_tP_t]\phi_t-P_t\hat{B}_t(N_{t-\theta}+\widetilde{N}_{t})^{-1}B^\top_{t-\theta}
\mathbb E ^{\mathcal F_{t-\theta}^{W^0}}[\tilde y_{t-\theta}]=0,\\
&~\phi_T=0.
\end{aligned}
\right.
\]
We obtain the optimal feedback is
\[
\bar{u}_t^i=-(N_t+\widetilde{N}_{t+\theta})^{-1}B^\top_t(P_t\bar x_t^i+\phi_t).
\]
From Theorem \ref{t2}, we claim that $(\bar{u}_t^1,\bar{u}_t^2,\cdots,\bar{u}_t^N)$ is the  $\epsilon$-Nash equilibrium of the Problem \eqref{system x}, \eqref{e2}.\\

\textbf{Case II:}
Now, we consider another special case. Let $A_t= B_t=0$ in system \eqref{e1}, moreover, we assume $\delta=\theta$, then we study the following system
\begin{equation}\label{e222}
\left\{
\begin{aligned}
dx_t^i&=\Big[\widetilde{A}_tx^i_{t-\delta}
+\widetilde{B}_tu^i_{t-\delta}+\frac{1}{N-1}
\sum_{j=1,j\neq i}^N\widehat B_tu^j_{t-\delta}\Big]dt+\sigma_tdW^i_t+\sigma^0_tdW^0_t,~t\in[0,T],\\
x^i_0&=a,~~~
x^i_t=\xi^i_t,~~~t\in[-\delta,0),~~~u^i_t=\eta^i_t,~~~t\in[-\theta,0),
\end{aligned}
\right.
\end{equation}
and the cost functional is
\[
\begin{aligned}
\mathcal{J}^i(u^i_t, u^{-i}_t)&=\frac{1}{2}\mathbb{E}\int_0^T\big[N_t(u^i_t)^2+ \widetilde{N}_t(u^i_{t-\theta})^2\big]dt+M
x^i_T.
\end{aligned}
\]
We will consider the following system instead of \eqref{e222}

\begin{equation}\label{e333}
\left\{
\begin{aligned}
d{x}_t^i&=\Big[\widetilde{A}_t{x}^i_{t-\delta}+\widetilde{B}_tu^i_{t-\delta}+m_0^{\delta}(t)
\Big]dt+\sigma_tdW^i_t+\sigma^0_tdW^0_t,~t\in[0,T],\\
{x}^i_0&=a,~~~
{x}^i_t=\xi^i_t,~~~t\in[-\delta,0),~~~u^i_t=\eta^i_t,~~~t\in[-\theta,0).
\end{aligned}
\right.
\end{equation}
and the adjoint equation is
\begin{equation}\label{adjiont}
\left\{
\begin{aligned}
d{y}_t^i&=-\widetilde{A}_{t+\delta}\mathbb E^{\mathcal F^i_t}[{y}^i_{t+\delta}]dt
+{z}^i_tdW^i_t+{z}^0_tdW^0_t,~t\in[0,T],\\
{y}^i_T&=-M,\\
{y}^i_t&=0,~~~t\in(T, T+(\delta\vee\theta)].
\end{aligned}
\right.
\end{equation}
We could solve \eqref{adjiont} explicitly by applying the method in \cite{Yu}, which can also be found in \cite{Menoukeu-Pamen}.\\

(i) When $t\in[T-\delta, T]$, the ABSDE \eqref{adjiont} becomes
\[
\bar y_t^i=-M-\int_t^T\bar z_s^idW_t^i-\int_t^T\bar z_s^0dW_t^0,~~~~t\in[T-\delta, T]
\]
We could solve
\[
\bar y_t=-M, ~~~
\bar z_t^i=0,~~~\bar z^0_t=0, ~~~t\in[T-\delta, T].
\]

(ii) If we solve \eqref{adjiont} on the interval $[T-k\delta, T-(k-1)\delta](k=1,2,3\cdots)$, and the solution $\{(\bar y_t^i, \bar z_t^i, \bar z_t^0); T-k\delta\leq t\leq T-(k-1)\delta\}$ is Malliavin differentiable, then we could solve the \eqref{adjiont} on the next interval $[T-(k+1)\delta, T-k\delta]$,
\[
\bar y_t^i=\mathbb E [\bar y_{T-k\delta}]+\int_t^{T-k\delta}\widetilde A_s \mathbb E^{\mathcal F^i_s}[\bar y_{s+\delta}]ds,
\]
and
\[
\bar z_t^i=0,~~~\bar z_t^0=0,~~~~t\in[T-(k+1)\delta, T-k\delta].
\]
The optimal control is
\[
\bar u^i_t=-(N_t+\widetilde N_{t+\delta})^{-1}\widetilde B_{t+\delta}\mathbb E^{\mathcal F^i_t}[\bar y_{t+\delta}].
\]
So the $\epsilon$-Nash equilibrium is $(\bar{u}_t^1,\bar{u}_t^2,\cdots,\bar{u}_t^N)$.

Next, we consider a special case that the  coefficients are all constants: $\widetilde A_t=\widetilde A$, $\widetilde B_t=\widetilde B$, $M=1$, $N_t=N$, $\widetilde N_t=\widetilde N$, then the solution of \eqref{adjiont} as follows,
\[
\begin{aligned}
\bar y_{t+\delta}^i&=0,~~\bar z_t^i=0, ~~~\bar z_t^0=0,~~~t\in[T-\delta, T];\\
\bar y_{t+\delta}^i&=-1, ~~\bar z_t^i=0, ~~~\bar z_t^0=0,~~~t\in[T-2\delta, T-\delta];\\
\bar y_{t+\delta}^i&=-1-\widetilde A(T-\delta-t), ~~\bar z_t^i=0, ~~~\bar z_t^0=0,~~~t\in[T-3\delta, T-2\delta];\\
\bar y_{t+\delta}^i&=-1-\widetilde A\delta-\widetilde A(T-2\delta-t)[1+\frac{1}{2}\widetilde A(T-2\delta-t)], ~~\bar z_t^i=0, ~~~\bar z_t^0=0,~~~t\in[T-4\delta, T-3\delta];\\
\cdots\cdots
\end{aligned}
\]

Then, $\epsilon$-Nash equilibrium is $(\bar{u}_t^1,\bar{u}_t^2,\cdots,\bar{u}_t^N)$, where
 $$\bar u^i_t=-\frac{\widetilde B}{N+\widetilde N}\bar y^i_{t+\delta}.$$


\end{document}